\documentclass[11pt]{article}

\usepackage[T1]{fontenc}
\usepackage[utf8]{inputenc}
\usepackage{lmodern}
\usepackage[margin=1in]{geometry}
\usepackage{microtype}

\usepackage{amsmath,amssymb,amsthm,mathtools}
\usepackage{bm}

\usepackage{graphicx}
\graphicspath{{figures/}}
\usepackage{booktabs}
\usepackage{array}

\usepackage[shortlabels]{enumitem}
\usepackage[numbers,sort&compress]{natbib}
\usepackage{xcolor}
\usepackage[colorlinks=true,linkcolor=blue,citecolor=blue,urlcolor=blue]{hyperref}
\usepackage[capitalise,noabbrev]{cleveref}

\theoremstyle{plain}
\newtheorem{theorem}{Theorem}[section]
\newtheorem{lemma}[theorem]{Lemma}
\newtheorem{proposition}[theorem]{Proposition}
\newtheorem{corollary}[theorem]{Corollary}

\theoremstyle{definition}
\newtheorem{definition}[theorem]{Definition}

\theoremstyle{remark}
\newtheorem{remark}[theorem]{Remark}

\newcommand{\R}{\mathbb{R}}

\newcommand{\ip}[2]{\left\langle #1,#2\right\rangle}
\newcommand{\norm}[1]{\left\lVert #1\right\rVert}
\newcommand{\one}{\mathbf{1}}

\DeclareMathOperator*{\argmin}{arg\,min}

\hypersetup{
  pdftitle={Constant Steps Are s-Composable: An Exact Interpolation
            Certificate for Gradient Descent},
  pdfauthor={Anonymous research draft}
}

\title{Constant Steps Are \(s\)-Composable:\\
An Exact Interpolation Certificate for Gradient Descent}
\author{Jinze Zhao\\University of California, San Diego\\\texttt{jiz419@ucsd.edu}}
\date{}

\begin{document}

\maketitle

\begin{abstract}
Grimmer, Shu, and Wang asked whether a balanced constant schedule is
\(s\)-composable at every horizon.  More precisely, for an integer
\(n\geq 1\), let \(\bar h=1+r\), where \(r\in(0,1)\) is the unique solution of
\[
  r^n\bigl(1+n(1+r)\bigr)=1,
\]
and run gradient descent for \(n\) steps with normalized stepsize \(\bar h\).
The cases \(n=1,2\) were known, while the general case \(n\geq3\) was left
open.  We prove the conjecture for every \(n\).  Our proof gives an explicit,
dimension-free nonnegative linear combination of the smooth convex
interpolation inequalities.  The certificate is assembled from matrices
supported on contiguous index intervals.  Its off-diagonal entries are
automatically positive, its quadratic part is diagonal, and its remaining
multipliers reduce to two scalar families.  We derive closed forms for those
families and prove positivity using strict concavity and elementary rational
inequalities.  Consequently, for every \(L\)-smooth convex function, the
constant schedule satisfies the sharp mixed terminal Lyapunov inequality
conjectured in the original paper, together with the associated simultaneous
objective-gap and gradient-norm bounds.  All exceptional horizons and boundary
indices are treated explicitly.

\end{abstract}

\section{Introduction}
\label{sec:introduction}

Gradient descent on an \(L\)-smooth convex function is usually analyzed with
normalized stepsizes in \((0,2)\).  Even in this classical regime, identifying
the exact finite-horizon worst case can be subtle.  The performance-estimation
framework of \citet{drori2014performance}, together with the exact smooth
convex interpolation theorem of \citet{taylor2017smooth}, turns such questions
into finite semidefinite programs.  This framework has led both to sharp
theorems and to numerical conjectures whose eventual proofs require finding
an analytic dual certificate.  Recent examples include exact constant-step
results for the objective gap and gradient norm
\citep{kim2025proof,rotaru2026exact}.

\citet{grimmer2025composing} introduced three notions of composable stepsize
schedules.  Their self-dual notion, called \(s\)-composability, packages a
function-value bound, a gradient-norm bound, and a distance-to-solution bound
into one terminal Lyapunov inequality.  In their Example~2 they observed the
following striking numerical pattern.  Given a horizon \(n\), let \(\bar h>1\)
be chosen so that
\begin{equation}
  \frac{1}{1+n\bar h}=(\bar h-1)^n.                 \label{eq:intro-root}
\end{equation}
The constant schedule \((\bar h,\ldots,\bar h)\) is \(s\)-composable for
\(n=1\) and \(n=2\), where it equals \((\sqrt2)\) and
\((3/2,3/2)\), respectively.  Their numerical PEP calculations suggested the
same conclusion for every \(n\), and the general case \(n\geq3\) was left as
an open question.  The question first appeared in October 2024 and remains
stated as open in the accepted version of the paper.

This paper resolves that question affirmatively.

\begin{theorem}[Main result, informal]
For every integer \(n\geq1\), the constant schedule defined by
\eqref{eq:intro-root} is \(s\)-composable.
\end{theorem}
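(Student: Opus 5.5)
The plan is to reduce $s$-composability to a single explicit dual certificate for the performance-estimation problem, and then verify that certificate analytically. I will normalize $L=1$, $f_\star=0$, $x_\star=0$, and write $x_{i+1}=x_i-\bar h g_i$ for $i=0,\dots,n-1$, with $g_i=\nabla f(x_i)$ and $f_i=f(x_i)$. The index $\star$ denotes the minimizer, with $g_\star=0$. Following \citet{grimmer2025composing}, $s$-composability asks for the terminal Lyapunov inequality
\[
\tfrac12\|x_0-x_\star\|^2 \;\ge\; \eta\bigl(f_n-f_\star\bigr)+\tfrac{\eta}{2}\|g_n\|^2,
\qquad \eta = \tfrac{1}{(\bar h-1)^n}=1+n\bar h,
\]
up to the paper's exact normalization. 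By the interpolation theorem of \citet{taylor2017smooth}, it suffices to find multipliers $\lambda_{ij}\ge0$ on the inequalities
\[
Q_{ij}:\quad f_i\ge f_j+\langle g_j,x_i-x_j\rangle+\tfrac12\|g_i-g_j\|^2,
\qquad i,j\in\{\star,0,\dots,n\},
\]
such that
\[
\tfrac12\|x_0\|^2-\eta f_n-\tfrac{\eta}{2}\|g_n\|^2-\sum\lambda_{ij}Q_{ij}
\]
is identically a nonnegative quadratic form in $(x_0,g_0,\dots,g_n)$. The function-value coefficients must cancel exactly. This gives linear "flow" constraints on $\lambda$, one per node.

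\textbf{Step 1: ansatz.} Guided by the abstract, I will look for $\lambda$ built from matrices supported on contiguous intervals $\{i,\dots,j\}$. Concretely, I take $\lambda_{i,i+1}$ and $\lambda_{i+1,i}$ along consecutive iterates, $\lambda_{\star,i}$ and $\lambda_{i,\star}$ to the minimizer, and long-range terms $\lambda_{i,n}$ to the last iterate. Each of these is parametrized by two scalar sequences $a_i,b_i$ defined by a recursion in $i$ with ratio $r=\bar h-1$. The design target is that, after substituting $x_i=x_0-\bar h\sum_{k<i}g_k$, the residual Gram matrix has all cross terms $\langle g_i,g_j\rangle$ for $i\ne j$ and $\langle x_0,g_i\rangle$ cancelling. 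What remains is then a diagonal form, possibly after completing a single square in $x_0$, say $\|x_0-c\sum_i\mu_i g_i\|^2$.

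\textbf{Step 2: reduce to scalars.} I will impose the cancellation of function values and the vanishing of the off-diagonal Gram entries. This yields a triangular linear recursion for $a_i,b_i$, which I expect to solve in closed form as combinations of $r^i$ and $i\,r^i$. The boundary indices $i=0$ and $i=n$ produce the exceptional equations. At $i=n$ the defining equation $r^n(1+n(1+r))=1$ is exactly what should close the system. It will make the coefficient of $\|x_0\|^2$ match $\tfrac12$ and the $\|g_n\|^2$ coefficient match $\eta/2$, so this is where the root condition enters. I will check the small cases $n=1,2$, with $\bar h=\sqrt2$ and $3/2$, against the known certificates to fix the ansatz.

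\textbf{Step 3: positivity (the main obstacle).} It remains to show that every multiplier is nonnegative and that every diagonal residual coefficient is nonnegative, for all $n$ and all $i$. These quantities are explicit expressions in $r\in(0,1)$ and $i,n$, with $r$ tied to $n$ implicitly. I will write them as $\phi(i/n)$-type functions, or as polynomials in $r^i$, and argue that they are strictly concave in $i$. It then suffices to check the endpoints $i=0$ and $i=n$. At the endpoints I will use the root equation to eliminate $r^n=1/(1+n\bar h)$, which reduces each sign condition to an elementary rational inequality in $r$ and $n$. These follow from bounds such as $r^n\le 1/(1+n)$ and $1-r\ge \log(1/r)$-type estimates. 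Any small $n$ where the generic bounds fail, likely $n=3,4$, I will verify directly. Finally I will sum the certificate to conclude the Lyapunov inequality and read off the simultaneous objective-gap and gradient-norm bounds.
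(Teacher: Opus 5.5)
There is a genuine gap. The proposal never actually produces a certificate, and the ansatz and design target you commit to provably cannot work for $n\ge3$.

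\textbf{The target inequality.} Yours is not a renormalization of \eqref{eq:s-definition}. Divide that inequality by $\eta^2$; note that the paper's $\eta$ is $r^n$, while your $\eta$ is $R=r^{-n}=1+n\bar h$. This gives
\[
\tfrac12\|x_0-x_\star\|^2\ \ge\ \tfrac12\|x_n-x_\star\|^2+(R-1)(f_n-f_\star)+\tfrac{R(R-1)}{2}\|g_n\|^2 .
\]
Your inequality drops $\tfrac12\|x_n-x_\star\|^2$ and is merely implied by this one (use $f_n-f_\star\le\tfrac12\|x_n-x_\star\|^2$ and $R>2$). So certifying yours proves nothing about $s$-composability.

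With the correct target, $\|x_0\|^2$ cancels, and no $Q_{ij}$ contains $\|x_0\|^2$. Hence a PSD residual can contain no $\langle x_0,g_i\rangle$ term. This forces $\lambda_{\star i}=\bar h$ for $i<n$ and $\lambda_{\star n}=0$, and then the $f_\star$ balance forces $\lambda_{i\star}=0$. That is exactly \Cref{lem:reduction}, and it leaves the pure-gradient target \eqref{eq:scalar-target}. Two consequences follow:
\begin{itemize}
\item Your ``square in $x_0$'' is unavailable.
\item A diagonal residual must vanish identically. Equality holds for $f=\tfrac12\|x\|^2$, where every $Q_{ij}=0$ and $g_i=(-r)^ix_0$ has no zero entry, so every diagonal entry of the residual is forced to zero.
\end{itemize}
Your design target therefore demands an exact certificate.

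\textbf{The support.} On your support (consecutive pairs, $\star$-pairs, and pairs containing $n$), no exact certificate exists once $n\ge3$. Normalize the iterate multipliers for \eqref{eq:scalar-target}.
\begin{itemize}
\item The only term producing $\langle g_0,g_2\rangle$ is $Q_{n0}$, with coefficient $2\bar h\lambda_{n0}$, so $\lambda_{n0}=0$.
\item The $\langle g_0,g_n\rangle$ coefficient $2\lambda_{n0}-2r\lambda_{0n}$ then gives $\lambda_{0n}=0$.
\item Node $0$ now touches only node $1$. Three equations remain there: the flow equation $\lambda_{01}-\lambda_{10}=1$, the $\langle g_0,g_1\rangle$ equation $\lambda_{10}=r\lambda_{01}$, and the $\|g_0\|^2$ equation $-\lambda_{01}+(2\bar h-1)\lambda_{10}=-r$.
\end{itemize}
Together these force $r^2+2r-1=0$, i.e.\ $r=\sqrt2-1$, which is the $n=1$ root. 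But $r\ge\tfrac12$ for $n\ge2$. Note that $r^2+2r-1$ is precisely the numerator of the paper's $p_0$.

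\textbf{What is missing.} The key idea is the dense interval certificate $C=\sum_ap_aC^{[a,n]}+\sum_iq_iC^{[i,i+1]}$. Each suffix mode puts weight on every pair in $[a,n]$, has zero row sums, and makes $C^{[a,n]}B+B^\top(C^{[a,n]})^\top$ diagonal (\Cref{lem:mode}). Matching then reduces to a cut-flow system and a triangular recurrence. Their solutions involve $r^{-2i}$ with rational dependence on $n-i$, not combinations of $r^i$ and $ir^i$.

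Consequently your Step~3 has nothing concrete to verify. In the paper, only the $q_i$ yield to a concavity-with-zero-endpoints argument like yours. The $p_i$ need a monotone-ratio argument together with the sharp root bounds of \Cref{lem:root-bounds}.
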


Writing \(\bar h=1+r\), the scalar \(r\in(0,1)\) is characterized by
\begin{equation}
  r^n\bigl(1+n(1+r)\bigr)=1,
  \qquad
  \eta=r^n=\frac{1}{1+n(1+r)}.                     \label{eq:intro-r}
\end{equation}
For a \(1\)-smooth convex objective, the assertion is the terminal inequality
\begin{align}
 &\frac{1-\eta}{2}\norm{\nabla f(x_n)}^2
 +\frac{\eta^2}{2}\norm{x_n-x_\star}^2
 +(\eta-\eta^2)\bigl(f(x_n)-f(x_\star)\bigr)       \notag\\
 &\hspace{42mm}\leq
 \frac{\eta^2}{2}\norm{x_0-x_\star}^2.            \label{eq:intro-main-potential}
\end{align}
The equality of the two expressions for \(\eta\) is essential: it balances
the quadratic and Huber worst-case instances in the definition of
\(s\)-composability.

\paragraph{Proof idea.}
For sampled triples \((x_i,g_i,f_i)\), smooth convex interpolation gives the
nonnegative inequalities
\[
 Q_{ij}=2f_i-2f_j-2\ip{g_j}{x_i-x_j}-\norm{g_i-g_j}^2\geq0.
\]
We construct explicit nonnegative multipliers \(\lambda_{ij}\) such that their
weighted sum is exactly
\begin{equation}
  2\sum_{i=0}^{n-1}(f_i-f_n)
  -r\sum_{i=0}^{n-1}\norm{g_i}^2
  -n\eta^{-1}\norm{g_n}^2.                         \label{eq:intro-certificate}
\end{equation}
Nonnegativity of this expression is equivalent to the certificate required
for \eqref{eq:intro-main-potential}.

The multipliers are encoded by matrices supported on contiguous intervals of
the iterate indices.  Every interval matrix has positive off-diagonal entries,
zero row sums, and---after multiplication by the gradient-descent trajectory
matrix---a diagonal symmetric part.  Only adjacent intervals and suffix
intervals are needed.  Matching the function coefficients becomes a flow
problem across the \(n\) cuts of the index path; matching the first \(n-1\)
diagonal coefficients becomes a scalar triangular recurrence.  Two universal
identities determine the final two diagonal coefficients.  The last obstacle
is to prove that all interval weights are positive.  We give closed forms and
prove their signs: one family follows from the strict concavity of a logarithmic
ratio, while the other follows from a monotone rational ratio and two sharp
elementary bounds on the root \(r\).

\paragraph{Contributions.}
The paper provides:
\begin{enumerate}[(i)]
  \item an affirmative solution of the constant-step \(s\)-composability
        problem for every horizon, including explicit treatment of
        \(n=1,2\);
  \item a dimension-free interpolation certificate with a sparse
        \((2n-1)\)-mode interval decomposition and fully explicit
        nonnegative multipliers;
  \item a proof of multiplier positivity that does not rely on numerical SDP
        output; and
  \item an equivalent terminal-energy statement for the relaxed proximal
        point algorithm, linking the result to the finite-horizon estimates of
        \citet{wang2025rppa}.
\end{enumerate}

% \paragraph{Status and scope of the literature claim.}
% The current version of \citet{grimmer2025composing} explicitly leaves the
% statement for \(n\geq3\) open.  We also searched later papers citing that work,
% as well as exact-phrase and exact-equation variants, through August 31, 2026;
% we located no proof or counterexample.  A nearby 2026 result of
% \citet{zhang2026certificate} proves a different final-objective PEP certificate.
% For even horizons its scalar balancing equation looks identical after a change
% of variables, but its terminal inequality is not
% \eqref{eq:intro-main-potential}.  We therefore treat the result here as a new
% research claim requiring independent peer review.

\paragraph{Organization.}
\Cref{sec:preliminaries} states the problem and reduces it to
\eqref{eq:intro-certificate}.  \Cref{sec:certificate} constructs and verifies
the interval certificate.  \Cref{sec:positivity} proves that every multiplier
is nonnegative.  \Cref{sec:consequences} records the scaled theorem,
simultaneous guarantees, and the proximal interpretation.  Detailed finite
sums and boundary calculations are collected in the appendix.

\section{Problem statement and reduction}
\label{sec:preliminaries}

We first work in the normalization \(L=1\).  The general \(L>0\) statement,
including the resulting stepsize \(h/L\) and the rescaled gradient term, is
derived explicitly in \Cref{cor:scaled}.

Let \(f:\R^d\to\R\) be differentiable, convex, and \(1\)-smooth, and assume
that \(x_\star\in\argmin f\).  Write
\[
  f_i=f(x_i),\qquad g_i=\nabla f(x_i),
\]
and run \(n\) constant steps of gradient descent,
\begin{equation}
  x_{i+1}=x_i-hg_i,\qquad i=0,\ldots,n-1.            \label{eq:gd}
\end{equation}
Throughout the proof,
\begin{equation}
  h=1+r,\qquad
  R=1+nh=r^{-n},\qquad
  \eta=R^{-1}=r^n,\qquad
  \alpha=1-r.                                      \label{eq:parameters}
\end{equation}
The scalar \(r\) is the unique root in \((0,1)\) of
\begin{equation}
  r^n(1+n(1+r))=1.                                  \label{eq:root}
\end{equation}
Indeed, the left-hand side is continuous and strictly increasing in \(r>0\),
equals zero at \(r=0\), and exceeds one at \(r=1\).

\begin{definition}[\(s\)-composability \citep{grimmer2025composing}]
\label{def:s-composable}
Let \(\mathbf h=(h_0,\ldots,h_{m-1})\in\R_{++}^m\).  The schedule
\(\mathbf h\) is \(s\)-composable with rate \(\eta\) if, for every
\(1\)-smooth convex \(f\), every minimizer \(x_\star\), and every initial
point, its gradient-descent trajectory
\(x_{i+1}=x_i-h_i\nabla f(x_i)\) satisfies
\begin{align}
 &\frac{1-\eta}{2}\norm{\nabla f(x_m)}^2
 +\frac{\eta^2}{2}\norm{x_m-x_\star}^2
 +(\eta-\eta^2)\bigl(f(x_m)-f(x_\star)\bigr) \notag\\
 &\hspace{38mm}\leq
 \frac{\eta^2}{2}\norm{x_0-x_\star}^2,             \label{eq:s-definition}
\end{align}
and the rate balances the two canonical worst cases:
\begin{equation}
 \eta=\frac{1}{1+\sum_{i=0}^{m-1}h_i}
      =\prod_{i=0}^{m-1}(h_i-1).                   \label{eq:s-rate}
\end{equation}
\end{definition}

For the constant length-\(n\) schedule in this paper,
\eqref{eq:s-rate} is exactly
\(\eta=(1+nh)^{-1}=(h-1)^n=r^n\), which is the root equation
\eqref{eq:root}.

\subsection{Smooth convex interpolation}

The exact interpolation theorem for smooth convex functions
\citep{taylor2017smooth} implies, in particular, that for every pair of sampled
points \(i,j\),
\begin{equation}
  Q_{ij}\coloneqq
  2f_i-2f_j-2\ip{g_j}{x_i-x_j}-\norm{g_i-g_j}^2
  \geq0.                                            \label{eq:Qij}
\end{equation}
For completeness, \eqref{eq:Qij} follows directly from the standard
cocoercive lower model
\[
 f_i\geq f_j+\ip{g_j}{x_i-x_j}
       +\frac12\norm{g_i-g_j}^2.
\]
Our proof uses only nonnegative linear combinations of
\eqref{eq:Qij}; hence it is valid in every dimension.

\subsection{The scalar inequality that suffices}

The following reduction is the constant-step specialization of the equivalent
characterization of \(s\)-composability in
\citet[Proposition~6]{grimmer2025composing}.  We include the calculation to fix
all signs and normalizations.

\begin{lemma}[Reduction to a terminal PEP certificate]
\label{lem:reduction}
Suppose that every trajectory \eqref{eq:gd} on every \(1\)-smooth convex
function satisfies
\begin{equation}
  2\sum_{i=0}^{n-1}(f_i-f_n)
  \geq r\sum_{i=0}^{n-1}\norm{g_i}^2+nR\norm{g_n}^2.
                                                               \label{eq:scalar-target}
\end{equation}
Then the constant schedule \((h,\ldots,h)\) is \(s\)-composable with rate
\(\eta=R^{-1}=r^n\), and \eqref{eq:intro-main-potential} holds.
\end{lemma}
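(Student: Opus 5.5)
The plan is to show that the potential inequality \eqref{eq:intro-main-potential} is a nonnegative combination of three things: the assumed inequality \eqref{eq:scalar-target}, the interpolation inequalities \eqref{eq:Qij} for pairs involving the minimizer, and exact algebraic identities coming from \eqref{eq:gd}. The rate condition \eqref{eq:s-rate} needs no work. For the constant schedule it reads \(\eta=(1+nh)^{-1}=(h-1)^n\), and this is \eqref{eq:root} with \(R=1+nh=r^{-n}\). So only \eqref{eq:s-definition} with \(m=n\) has to be verified. Throughout I add \(x_\star\) as an extra sampled point with \(g_\star=0\) and \(f_\star=f(x_\star)\). This is legitimate because \eqref{eq:Qij} holds for any pair of points of a \(1\)-smooth convex function. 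It supplies
\[
Q_{\star i}=2f_\star-2f_i+2\ip{g_i}{x_i-x_\star}-\norm{g_i}^2\ge0,
\qquad
Q_{i\star}=2(f_i-f_\star)-\norm{g_i}^2\ge0 .
\]

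First, I eliminate the terminal distance. From \eqref{eq:gd}, \(x_n-x_\star=(x_0-x_\star)-h\sum_{i<n}g_i\). Hence
\[
\tfrac{\eta^2}{2}\bigl(\norm{x_0-x_\star}^2-\norm{x_n-x_\star}^2\bigr)
=\eta^2h\sum_{i<n}\ip{g_i}{x_0-x_\star}-\tfrac{\eta^2h^2}{2}\Bigl\|\sum_{i<n}g_i\Bigr\|^2 .
\]
Next I rewrite each \(\ip{g_i}{x_0-x_\star}\) as \(\ip{g_i}{x_i-x_\star}+h\sum_{k<i}\ip{g_i}{g_k}\). The cross terms \(h^2\sum_{k<i}\ip{g_i}{g_k}\) should cancel exactly against the off-diagonal part of \(\tfrac{h^2}{2}\|\sum g_i\|^2\), leaving \(-\tfrac{\eta^2h^2}{2}\sum_{i<n}\norm{g_i}^2\). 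After this step, the gap between the right- and left-hand sides of \eqref{eq:intro-main-potential} is a combination of \(\ip{g_i}{x_i-x_\star}\) for \(i<n\), the squares \(\norm{g_i}^2\), \(\norm{g_n}^2\), and the function values.

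Second, I bound each \(\ip{g_i}{x_i-x_\star}\) from below using \(Q_{\star i}\) with multiplier \(\eta^2h/2\). This turns the gap into a combination of \(f_i-f_\star\) and \(\norm{g_i}^2\) only. The terminal term \(-(\eta-\eta^2)(f_n-f_\star)\) is handled by splitting every \(f_i-f_\star\) as \((f_i-f_n)+(f_n-f_\star)\). This produces the block \(\eta^2h\sum_{i<n}(f_i-f_n)\). It also produces a net coefficient on \(f_n-f_\star\), namely \(\eta^2 nh-(\eta-\eta^2)=\eta^2(1+nh)-\eta=0\), using \(R=1+nh=\eta^{-1}\). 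I expect this cancellation to be exactly where the balancing condition enters.

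Third, I compare what remains with \(\tfrac{\eta^2h}{2}\) times \eqref{eq:scalar-target}. The coefficient of each \(\norm{g_i}^2\) with \(i<n\) left after the first two steps should be at least \(-\tfrac{\eta^2h}{2}r\). The coefficient of \(\norm{g_n}^2\), coming from \(-\tfrac{1-\eta}{2}\), should be at least \(-\tfrac{\eta^2h}{2}nR\). If the first falls short, I would add suitable multiples of \(Q_{i\star}\) to absorb the surplus. The second should reduce via \(nhR=R-1=(1-\eta)/\eta\). The main obstacle is this coefficient bookkeeping: choosing the multipliers of \(Q_{\star i}\) and \(Q_{i\star}\), and possibly rescaling the multiplier \(\eta^2h/2\) on \eqref{eq:scalar-target}, so that every \(\norm{g_i}^2\) coefficient matches. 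If it does not close directly, I would follow \citet[Proposition~6]{grimmer2025composing}: the equivalence stated there fixes these multipliers, and I would simply check the signs.
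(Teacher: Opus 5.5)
Your argument is correct and is the paper's proof read in reverse. The paper shows that $\mathcal E$ equals $h$ times the slack in \eqref{eq:scalar-target}, adds $h\sum_{i<n}Q_{\star i}$, and multiplies by $\eta^2/2$. Those are exactly your multipliers, $\eta^2h/2$ on each $Q_{\star i}$ and on \eqref{eq:scalar-target}.

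The bookkeeping you flag as the obstacle closes with equality, so no $Q_{i\star}$ and no rescaling are needed:
\begin{itemize}
  \item the $\norm{g_i}^2$ coefficient is $\frac{\eta^2h}{2}(1-h)=-\frac{\eta^2h}{2}r$;
  \item $\frac{\eta^2h}{2}nR=\frac{\eta}{2}\,nh=\frac{1-\eta}{2}$, because $\eta R=1$ and $nh=R-1=(1-\eta)/\eta$.
\end{itemize}
Your written identity $nhR=R-1$ is wrong by a factor of $R$: the correct identity is $nh=R-1$.
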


\begin{proof}
Define
\begin{align}
 \mathcal E
 &\coloneqq
 \sum_{i=0}^{n-1}h\Bigl(2(f_i-f_n)+\norm{g_i}^2
             +2\ip{g_i}{x_0-x_i}\Bigr)
 -\norm{x_n-x_0}^2-\frac{1-\eta}{\eta^2}\norm{g_n}^2.
                                                               \label{eq:E-def}
\end{align}
Because \(x_0-x_i=h\sum_{j<i}g_j\) and
\(x_0-x_n=h\sum_{i<n}g_i\), expansion of the squared norm gives
\begin{align*}
 &h\sum_{i<n}\left(\norm{g_i}^2
       +2\ip{g_i}{x_0-x_i}\right)-\norm{x_n-x_0}^2\\
 &=h\sum_{i<n}\norm{g_i}^2
   +2h^2\sum_{0\leq j<i<n}\ip{g_i}{g_j}
   -h^2\left(\sum_{i<n}\norm{g_i}^2
   +2\sum_{0\leq j<i<n}\ip{g_i}{g_j}\right)\\
 &=-h(h-1)\sum_{i<n}\norm{g_i}^2=-hr\sum_{i<n}\norm{g_i}^2.
\end{align*}
Moreover, \(1-\eta=nh\eta\), so
\[
 \frac{1-\eta}{h\eta^2}=\frac{n}{\eta}=nR.
\]
Consequently,
\[
 \mathcal E=h\left(
 2\sum_{i<n}(f_i-f_n)-r\sum_{i<n}\norm{g_i}^2
 -nR\norm{g_n}^2\right),
\]
which is nonnegative by \eqref{eq:scalar-target}.

Set \(g_\star=0\).  Each \(Q_{\star i}\) is nonnegative, so
\(\mathcal E+h\sum_{i<n}Q_{\star i}\geq0\).  Substituting
\eqref{eq:Qij} and cancelling the \(f_i\), \(\norm{g_i}^2\), and
\(x_i\) terms yields
\begin{align*}
0\leq{}&2nh(f_\star-f_n)
 +2\ip{x_0-x_n}{x_0-x_\star}
 -\norm{x_n-x_0}^2
 -\frac{1-\eta}{\eta^2}\norm{g_n}^2\\
={}&\frac{2(1-\eta)}{\eta}(f_\star-f_n)
 +\norm{x_0-x_\star}^2-\norm{x_n-x_\star}^2
 -\frac{1-\eta}{\eta^2}\norm{g_n}^2.
\end{align*}
Multiplying by \(\eta^2/2\) and rearranging gives
\eqref{eq:intro-main-potential}.  Finally,
\(\eta=(1+nh)^{-1}=r^n=(h-1)^n\), which is exactly the algebraic rate
condition in the definition of \(s\)-composability.
\end{proof}

Thus the rest of the paper proves \eqref{eq:scalar-target} by an explicit
nonnegative interpolation certificate.

\section{The interval interpolation certificate}
\label{sec:certificate}

This section constructs the certificate.  Positivity of all scalar weights is
proved separately in \Cref{sec:positivity}.

\subsection{Trajectory matrix and interval modes}

Index matrices by \(\{0,1,\ldots,n\}\), and define the lower-triangular
trajectory matrix \(B\in\R^{(n+1)\times(n+1)}\) by
\begin{equation}
 B_{ij}=\begin{cases}
   h,&j<i,\\
   1,&j=i,\\
   0,&j>i.
 \end{cases}                                         \label{eq:B-def}
\end{equation}
If \(g=(g_0,\ldots,g_n)^\top\), then
\begin{equation}
 (Bg)_i=g_i+h\sum_{j<i}g_j=g_i+x_0-x_i.             \label{eq:Bg}
\end{equation}

For integers \(0\leq a<b\leq n\), define a matrix
\(C^{[a,b]}\) supported on the interval \([a,b]\).  Put
\(\ell=b-a\), and for a row \(i\in[a,b]\) write \(s=i-a\).  Its entries are
\begin{equation}
 C^{[a,b]}_{ij}=\begin{cases}
 h(s+1)-1,&a\leq i<j\leq b,\\
 1+h(\ell-s),&a\leq j<i\leq b,\\
 -\bigl[(\ell-s)(h(s+1)-1)+s(1+h(\ell-s))\bigr],&i=j,\\
 0,&\text{otherwise}.
 \end{cases}                                         \label{eq:interval-mode}
\end{equation}
The diagonal was chosen so that \(C^{[a,b]}\one=0\).

\begin{lemma}[An interval mode diagonalizes]
\label{lem:mode}
Let \(H^{[a,b]}=C^{[a,b]}B+B^\top(C^{[a,b]})^\top\).  Then
\(H^{[a,b]}\) is diagonal.  At the index \(i=a+s\), put
\(t=b-i=\ell-s\).  Its diagonal entry is
\begin{equation}
 \delta(s,t)=2\bigl[r^2(s+1)t-s(t+1)\bigr].         \label{eq:delta}
\end{equation}
Moreover, the cumulative column sum through the cut after \(i\) is zero if
the interval does not cross the cut, and otherwise is
\begin{equation}
 \sigma(s,t)=(s+1)t\left[\alpha+\frac h2(t-s-1)\right].
                                                               \label{eq:sigma}
\end{equation}
\end{lemma}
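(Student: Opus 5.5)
The plan is a direct entrywise computation. First we reduce to a local problem, then split the off-diagonal and diagonal entries of \(H^{[a,b]}\) into cases, and finally rewrite the cumulative column sum as a flow across the cut.

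\textbf{Localization.} Write \(C=C^{[a,b]}\). For any \(i\),
\[
(CB)_{ij}=\sum_{k}C_{ik}B_{kj}=C_{ij}+h\sum_{k>j}C_{ik}.
\]
\begin{itemize}
\item If \(j>b\), every \(k\) in the support of row \(i\) satisfies \(k\leq b<j\), so \((CB)_{ij}=0\).
\item If \(j<a\), then \(B_{kj}=h\) for every \(k\in[a,b]\), so \((CB)_{ij}=h(C\one)_i=0\) by the zero row sums.
\item Rows outside \([a,b]\) vanish trivially.
\end{itemize}
Hence \(CB\) is supported on \([a,b]\times[a,b]\). Since every entry of \(C\) depends only on \(s=i-a\) and \(\ell\), we may take \(a=0\), \(b=\ell\). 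It then suffices to show that \(M\coloneqq CB\) satisfies \(M_{ij}+M_{ji}=0\) for \(i\neq j\) in \([0,\ell]\), and that \(2M_{ii}=\delta(i,\ell-i)\).

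\textbf{Off-diagonal entries.} Fix \(i<j\) and put \(s=i\). Row \(i\) has the entry \(h(s+1)-1\) at every \(k>i\), so
\[
M_{ij}=(h(s+1)-1)\bigl(1+h(\ell-j)\bigr).
\]
For \(M_{ji}\), row \(j\) (with \(s'=j\)) has the entry \(1+h(\ell-j)\) at columns below \(j\). It has the diagonal at \(k=j\), and the entry \(h(j+1)-1\) above \(j\). Using the zero row sum, \(\sum_{k>i}C_{jk}=-\sum_{k\le i}C_{jk}=-(i+1)(1+h(\ell-j))\), so
\[
M_{ji}=(1+h(\ell-j))\bigl(1-h(i+1)\bigr).
\]
This is exactly \(-M_{ij}\). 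The pleasant point is that the row-sum identity avoids expanding the diagonal of \(C\) at all. Symmetrizing then kills all off-diagonal entries of \(H^{[a,b]}\).

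\textbf{Diagonal entries.} Again with \(s=i\) and \(t=\ell-s\),
\[
M_{ii}=C_{ii}+h\,t\,(h(s+1)-1),
\qquad
C_{ii}=-\bigl[t(h(s+1)-1)+s(1+ht)\bigr].
\]
Thus
\[
M_{ii}=(h-1)\,t\,(h(s+1)-1)-s(1+ht).
\]
Expanding with \(h=1+r\) gives \(r t\bigl(r(s+1)+s\bigr)-s-st-rst=r^2(s+1)t-s(t+1)\). Doubling yields \(\delta(s,t)\) as claimed.

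\textbf{Cumulative column sum.} Let \(\Sigma_i=\sum_{p}\sum_{q\le i}C_{pq}\). By the zero row sums,
\[
\Sigma_i=\sum_{p>i,\,q\le i}C_{pq}-\sum_{p\le i,\,q>i}C_{pq},
\]
which is the net flow across the cut after \(i\). If the interval does not cross the cut, each row of \(C\) lies entirely on one side, and both terms vanish. Otherwise, with \(s=i-a\) and \(t=b-i\), both terms are explicit:
\[
\sum_{p>i,\,q\le i}C_{pq}=(s+1)\sum_{u=1}^{t}\bigl(1+h(t-u)\bigr),
\qquad
\sum_{p\le i,\,q>i}C_{pq}=t\sum_{u=0}^{s}\bigl(h(u+1)-1\bigr).
\]
Evaluating the arithmetic sums gives
\[
\Sigma_i=(s+1)t\Bigl[\,2-h+\tfrac h2(t-s-1)\Bigr]
=(s+1)t\Bigl[\alpha+\tfrac h2(t-s-1)\Bigr],
\]
using \(2-h=1-r=\alpha\). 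This is \(\sigma(s,t)\).

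The main obstacle is purely bookkeeping: fixing what "cumulative column sum" means, and keeping the signs consistent between the two sides of the cut. Checking the simplest crossing case, \(s=0\), \(t=1\), gives \(\sigma=\alpha\), which confirms the computation above.
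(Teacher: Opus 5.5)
Your proposal is correct and follows essentially the paper's proof: the same entrywise computation of \(C^{[a,b]}B\) using the zero row sum to obtain the lower off-diagonal entries as \(-v_q u_s\), the same expansion of the diagonal, and the same evaluation of the cumulative column sum as net flow across the cut. Your explicit localization step, in particular the case \(j<a\) handled via \(C\mathbf{1}=0\), spells out a point the paper leaves implicit when it computes only entries inside the interval.
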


\begin{proof}
All nonzero off-diagonal entries of \(C^{[a,b]}\) are positive.  Put
\[
 u_s=h(s+1)-1=hs+r,\qquad v_s=1+h(\ell-s).
\]
For two local indices \(s<q\), the definition of \(B\) gives
\[
 (C^{[a,b]}B)_{sq}
 =u_s\bigl[1+h(\ell-q)\bigr]=u_sv_q.
\]
Using the zero row sum in row \(q\),
\[
 (C^{[a,b]}B)_{qs}
 =v_q+h\sum_{j>s}C^{[a,b]}_{qj}
 =v_q-h(s+1)v_q=-v_qu_s.
\]
Thus every off-diagonal entry of
\(C^{[a,b]}B+B^\top(C^{[a,b]})^\top\) vanishes.
On the diagonal,
\begin{align*}
 (C^{[a,b]}B)_{ss}
 &=-(\ell-s)u_s-sv_s+h(\ell-s)u_s\\
 &=r(\ell-s)\bigl(h(s+1)-1\bigr)-s\bigl(1+h(\ell-s)\bigr)\\
 &=r^2(s+1)t-s(t+1),
\end{align*}
which proves \eqref{eq:delta} after symmetrization.

For the cut after local index \(s\), the cumulative column sum is the flow
from right to left minus the flow from left to right.  Directly,
\begin{align*}
 & (s+1)\sum_{q=0}^{t-1}(1+hq)
 -t\sum_{q=0}^{s}\bigl(h(q+1)-1\bigr)\\
 &\quad=(s+1)t\left(1+\frac h2(t-1)
             -\frac h2(s+2)+1\right)\\
 &\quad=(s+1)t\left[\alpha+\frac h2(t-s-1)\right],
\end{align*}
because \(\alpha=2-h\).  This is \eqref{eq:sigma}.
\end{proof}

\subsection{The scalar weights}

The certificate uses the \(n\) suffix modes \([a,n]\) and the \(n-1\)
adjacent modes \([i,i+1]\).  We now define their weights.

For \(i=0,\ldots,n-2\), set
\begin{align}
 \mathsf Z_i
   &=\frac{2hi+3+r-\alpha r^{-2i-2}}{h^2},
 &\mathsf Y_i&=\frac{\mathsf Z_i}{n-i},             \label{eq:ZY-def}\\
 \mathsf W_i
   &=\mathsf Y_i-3\mathsf Y_{i-1}
       +3\mathsf Y_{i-2}-\mathsf Y_{i-3},           \label{eq:W-def}
\end{align}
where \(\mathsf Y_j=0\) for \(j<0\).  With \(p_{-1}=0\), define
\begin{equation}
 p_i=\frac{\mathsf W_i+(n-i+4)p_{i-1}}{n-i-1},
 \qquad i=0,\ldots,n-2.                             \label{eq:p-recurrence}
\end{equation}
For \(i=0,\ldots,n-2\), define
\begin{equation}
 q_i=\frac{i+1-\displaystyle\sum_{a=0}^{i}
             p_a\sigma(i-a,n-i)}{\alpha}.           \label{eq:q-def}
\end{equation}
The final suffix weight is
\begin{equation}
 p_{n-1}=\frac{n-\displaystyle\sum_{a=0}^{n-2}p_a(n-a)
       \left[\alpha-\frac h2(n-1-a)\right]}{\alpha}.
                                                               \label{eq:p-last}
\end{equation}
Empty ranges have their usual meanings.  Thus for \(n=1\), there are no
\(q_i\), \eqref{eq:p-recurrence} is empty, and \(p_0=1/\alpha\).

Define
\begin{equation}
 C=\sum_{a=0}^{n-1}p_aC^{[a,n]}
    +\sum_{i=0}^{n-2}q_iC^{[i,i+1]},
 \qquad H=CB+B^\top C^\top.                         \label{eq:C-H}
\end{equation}

\begin{lemma}[Exact coefficient matching]
\label{lem:matching}
The matrix \(C\) has zero row sums and column-sum vector
\begin{equation}
 b=(\underbrace{1,\ldots,1}_{n\ \mathrm{entries}},-n)^\top.   \label{eq:b}
\end{equation}
The matrix \(H\) is diagonal, with
\begin{equation}
 H_{ii}=\alpha\quad(0\leq i<n),
 \qquad H_{nn}=-n(1+R).                              \label{eq:H-target}
\end{equation}
\end{lemma}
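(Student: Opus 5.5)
Zero row sums and diagonality of \(H\) follow at once from linearity and \Cref{lem:mode}. Every interval mode has \(C^{[a,b]}\one=0\), so \(C\one=0\). Each \(H^{[a,b]}\) is diagonal, so \(H=\sum_a p_aH^{[a,n]}+\sum_i q_iH^{[i,i+1]}\) is diagonal. The real content is the column sums and the diagonal values. I will carry these out in the following order.

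\emph{Column sums via cut flows.} Since \(C\one=0\), the total column sum \(\one^\top C\one\) vanishes. Hence \(b\) is determined by its partial sums across the \(n\) cuts between \(i\) and \(i+1\), for \(i=0,\ldots,n-1\). The target \eqref{eq:b} has partial sums \(\sum_{j\le i}b_j=i+1\), and the remaining entry \(-n\) is then forced by the zero total. By \Cref{lem:mode}, the cumulative column sum of a mode across cut \(i\) is \(\sigma(s,t)\) if the mode crosses the cut and \(0\) otherwise. The modes crossing cut \(i\) are the suffix modes \([a,n]\) with \(a\le i\), contributing \(p_a\sigma(i-a,n-i)\), and the single adjacent mode \([i,i+1]\) when \(i\le n-2\), contributing \(q_i\sigma(0,1)=q_i\alpha\). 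So for \(i\le n-2\) the cut condition is exactly \eqref{eq:q-def}. For \(i=n-1\) only suffix modes cross, and \(\sigma(n-1-a,1)=(n-a)[\alpha-\tfrac h2(n-1-a)]\). Solving for the \(a=n-1\) term, whose coefficient is \(\sigma(0,1)=\alpha\), reproduces \eqref{eq:p-last}. This step is pure bookkeeping.

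\emph{Diagonal entries \(0\le i\le n-2\).} The suffix modes give \(\sum_{a\le i}p_a\delta(i-a,n-i)\). Adjacent modes give \(q_i\delta(0,1)+q_{i-1}\delta(1,0)\), where \(\delta(0,1)=2r^2\) and \(\delta(1,0)=-2\). Substituting \(q_i,q_{i-1}\) from \eqref{eq:q-def} turns the condition \(H_{ii}=\alpha\) into a linear condition on \(p_0,\ldots,p_i\). The coefficient of \(p_i\) is \(\delta(0,n-i)+(2r^2/\alpha)\,\sigma(0,n-i)\), which is linear in \(n-i\), so this system is lower triangular. I must then show it is equivalent to \eqref{eq:p-recurrence}, with \(\mathsf W_i\) the third backward difference of \(\mathsf Y\).

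The plan is to exploit that \(\delta(s,t)\) and \(\sigma(s,t)\) are polynomials in \(s\) of degree at most \(3\) once \(t=(n-a)-s\) is fixed via \(a\). So applying the third backward difference in \(i\) collapses the convolution \(\sum_a p_a(\cdot)\) to a short relation among \(p_i\) and \(p_{i-1}\), with coefficients \((n-i-1)\) and \((n-i+4)\). Before differencing, the inhomogeneous term must come out as \((n-i)^{-1}\mathsf Z_i\), up to normalization. To justify this, I will verify that \(\mathsf Z_i\) solves the undifferenced triangular system in closed form. Here the term \(\alpha r^{-2i-2}\) arises from the geometric factor produced by the ratio \(r^2\) between \(\delta(0,1)\) and \(\delta(1,0)\). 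This differencing identity is the main obstacle. I would first verify it symbolically for small \(i\), with the boundary convention \(\mathsf Y_j=0\) for \(j<0\), and then prove it by induction on \(i\).

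\emph{The last two diagonal entries.} \(H_{n-1,n-1}\) and \(H_{nn}\) are not imposed by the recurrence. They follow from two universal identities. The first is the trace identity \(\operatorname{tr}H=2\operatorname{tr}(CB)\), where \(\operatorname{tr}(CB)\) is expressible through the column sums \(b\) and \(B\)'s structure (\(B=hL+(1-h)I\) with \(L\) all-ones lower triangle). This gives \(\sum_i H_{ii}=2\sum_{j}(\sum_{i\ge j}\cdot)\), determined by \(b\). The second is obtained by evaluating \(\one^\top H\one=2\one^\top C B\one=0\) against \(C\one=0\), or a weighted analogue with the vector \(B\one\). These two linear equations in the unknowns \(H_{n-1,n-1}\) and \(H_{nn}\), given \(H_{ii}=\alpha\) for \(i\le n-2\) and \(b\) from the first step, force \(H_{n-1,n-1}=\alpha\) and \(H_{nn}=-n(1+R)\). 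The root equation \(R=1+nh=r^{-n}\) is used here to simplify. The boundary cases \(n=1,2\) I will check directly.
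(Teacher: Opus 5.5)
\paragraph{Where the proposal stands.} Your treatment of zero row sums, diagonality of $H$, and the column sums via cut flows is correct and matches the paper's argument. There are, however, two genuine gaps. The first concerns the diagonal matching for $0\le i\le n-2$. You flag it as the main obstacle but do not resolve it, and the mechanism you sketch cannot work on the system you actually have.

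\paragraph{Gap 1: the diagonal entries $H_{ii}$, $i\le n-2$.} After eliminating $q_i,q_{i-1}$ through \eqref{eq:q-def}, the $p$-dependent part of $H_{ii}$ is $(h/\alpha)\sum_{a\le i}p_aG(i-a,n-i)$, where $G(s,t)=s(t+1)(t-s+1)-r^2(s+1)t(t-s-1)$. The condition $H_{ii}=\alpha$ is then equivalent to $\sum_{a\le i}p_aG(i-a,n-i)=c_i$, with $c_i=2\alpha i+(\alpha^2-2r^2)/h$. This causes two problems.
\begin{itemize}
\item For fixed $a$ the kernel is genuinely cubic in $i$, with leading term $2(1-r^2)i^3$. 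A third backward difference therefore leaves a nonzero multiple of every $p_a$ with $a\le i-3$.
\item No differencing of a system whose right side is linear in $i$ can produce the $r^{-2i}$ terms in $\mathsf W_i$.
\end{itemize}
There is also a slip: the coefficient of $p_i$ is $\delta(0,t)-(2r^2/\alpha)\sigma(0,t)=-(hr^2/\alpha)t(t-1)$ with $t=n-i$. It is quadratic rather than linear, and carries a minus sign.

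You rightly anticipate that an intermediate system with right side $\mathsf Z_i/(n-i)$ is needed, but you give no route to it. The missing idea is the telescoping factorization
\[
G(s,t)=K(s-1,t+1)-r^2K(s,t),\qquad K(s,t)=(s+1)t(t-s-1).
\]
Set $\widehat Z_i=\sum_{a\le i}p_aK(i-a,n-i)$. The shifted sum loses its $a=i$ term because $K(-1,\cdot)=0$. The system then becomes the first-order recurrence $\widehat Z_{i-1}-r^2\widehat Z_i=c_i$ with $\widehat Z_{-1}=0$, whose unique solution is $\mathsf Z_i$. Inverting this operator is what generates the term $\alpha r^{-2i-2}$.

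Only after this step may you divide by $t=n-i$, which $K$ carries as a factor. This gives the quadratic kernel $(i-a+1)(n-2i+a-1)$. Its third backward difference vanishes except at $a=i$ and $a=i-1$, which yields exactly $(n-i-1)p_i-(n-i+4)p_{i-1}=\mathsf W_i$. Checking small $i$ and then ``inducting'' does not replace this identity.

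\paragraph{Gap 2: the last two diagonal entries.} Your claim $\one^\top H\one=2\one^\top CB\one=0$ is false. Since $\one^\top C=b^\top$, we have $\one^\top H\one=2b^\top B\one=-hn(n+1)\neq0$. This quadratic form is also just your trace identity again: the trace equals $\one^\top H\one$, and so is determined by $b$, only because $H$ is already diagonal. It therefore supplies no second equation.

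To use $C\one=0$ you need a vector $v$ with $Bv=\one$, that is $v=B^{-1}\one$, not $B\one$. Take $v_i=(-r)^i$; the geometric sum and $h=1+r$ give $Bv=\one$. Hence $v^\top Hv=2v^\top C\one=0$, which reads $\sum_{i=0}^nH_{ii}r^{2i}=0$. This is where the root equation $r^{-n}=R=1+nh$ is actually used: it makes $\alpha\sum_{i<n}r^{2i}=n(1+R)r^{2n}$ hold. Combine it with $\sum_iH_{ii}=2b^\top B\one$. The resulting $2\times2$ system for $(H_{n-1,n-1},H_{nn})$ has determinant $r^{2(n-1)}(1-r^2)\neq0$, so it forces the two target values.
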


\begin{proof}
Every interval mode has zero row sums and a diagonal symmetric product by
\Cref{lem:mode}; the same is true of their linear combination.

It remains to match the column sums and diagonal.  A zero-row-sum matrix has
column sums \(b\) if and only if its cumulative column sums through cuts
\(0,\ldots,n-1\) equal \(1,2,\ldots,n\).  At cut \(i\leq n-2\),
the suffix modes contribute
\[
 A_i\coloneqq\sum_{a=0}^{i}p_a\sigma(i-a,n-i),
\]
and the sole crossing adjacent mode contributes \(\alpha q_i\), because
\(\sigma(0,1)=\alpha\).  Equation \eqref{eq:q-def} gives
\(A_i+\alpha q_i=i+1\).  At the final cut, the suffix \([a,n]\) contributes
\[
 \sigma(n-1-a,1)=(n-a)
      \left[\alpha-\frac h2(n-1-a)\right].
\]
Equation \eqref{eq:p-last} makes their sum equal to \(n\).  This proves
\eqref{eq:b}.

We next match \(H_{ii}\) for \(i\leq n-2\).  Put \(q_{-1}=0\).  From
\eqref{eq:delta}, the diagonal at \(i\) is
\begin{equation}
 \sum_{a=0}^{i}p_a\delta(i-a,n-i)+2r^2q_i-2q_{i-1}.
                                                               \label{eq:diag-before}
\end{equation}
Multiply \eqref{eq:diag-before} by \(\alpha/2\), and use
\(\alpha q_i=i+1-A_i\) and
\(\alpha q_{i-1}=i-A_{i-1}\).  With \(s=i-a\), \(t=n-i\), and the
convention \(\sigma(-1,t+1)=0\), this gives
\begin{align}
 \frac{\alpha}{2}H_{ii}
 &=r^2(i+1)-i \notag\\
 &\quad+\sum_{a=0}^{i}p_a
 \left\{\alpha\bigl[r^2(s+1)t-s(t+1)\bigr]
       -r^2\sigma(s,t)+\sigma(s-1,t+1)\right\}.    \label{eq:diag-eliminated}
\end{align}
Substitution of \eqref{eq:sigma} into the braces gives
\begin{align*}
 &\alpha\bigl[r^2(s+1)t-s(t+1)\bigr]\\
 &\quad-r^2(s+1)t\left[\alpha+\frac h2(t-s-1)\right]
 +s(t+1)\left[\alpha+\frac h2(t-s+1)\right]\\
 &=\frac h2\left\{
 s(t+1)(t-s+1)-r^2(s+1)t(t-s-1)\right\}.
\end{align*}
Thus the braces equal \((h/2)G(s,t)\), where
\begin{equation}
 G(s,t)=s(t+1)(t-s+1)-r^2(s+1)t(t-s-1).             \label{eq:G}
\end{equation}
Setting \(H_{ii}=\alpha\) in \eqref{eq:diag-eliminated} is therefore
equivalent to
\[
 \sum_{a=0}^{i}p_aG(i-a,n-i)
 =\frac{\alpha^2-2r^2(i+1)+2i}{h}.
\]
Since \(1-r^2=\alpha h\), the right side is \(c_i\), where
\begin{equation}
 c_i=2\alpha i+\frac{\alpha^2-2r^2}{h}.             \label{eq:ci}
\end{equation}
Thus \eqref{eq:diag-before} equals \(\alpha\) exactly when
\begin{equation}
 \sum_{a=0}^{i}p_aG(i-a,n-i)=c_i.                  \label{eq:G-system}
\end{equation}

Here is the complete reduction of \eqref{eq:G-system} to
\eqref{eq:p-recurrence}.  Define
\[
 K(s,t)=(s+1)t(t-s-1).
\]
Then
\[
 G(s,t)=K(s-1,t+1)-r^2K(s,t).
\]
If
\[
 \widehat Z_i=\sum_{a=0}^{i}p_aK(i-a,n-i),
 \qquad \widehat Z_{-1}=0,
\]
the system \eqref{eq:G-system} is
\begin{equation}
 \widehat Z_{i-1}-r^2\widehat Z_i=c_i.              \label{eq:Z-recurrence}
\end{equation}
Extending the displayed formula for \(\mathsf Z_i\) to \(i=-1\) gives
\(\mathsf Z_{-1}=0\).  A direct substitution shows that the unique
solution of \eqref{eq:Z-recurrence} is
\(\widehat Z_i=\mathsf Z_i\) from \eqref{eq:ZY-def}: indeed,
\begin{align*}
 \mathsf Z_{i-1}-r^2\mathsf Z_i
 &=\frac{2h(i-1)+3+r-\alpha r^{-2i}}{h^2}
   -r^2\frac{2hi+3+r-\alpha r^{-2i-2}}{h^2}\\
 &=2\alpha i+\frac{\alpha^2-2r^2}{h}=c_i.
\end{align*}
Moreover, define \(\widehat Y_i=\widehat Z_i/(n-i)\).  Then
\begin{equation}
 \widehat Y_i
 =\sum_{a=0}^{i}p_a(i-a+1)(n-2i+a-1).              \label{eq:Y-convolution}
\end{equation}
Set \(\widehat Y_j=0\) for \(j<0\).  Taking the third backward difference
of \eqref{eq:Y-convolution} cancels the quadratic kernel for every
\(a\leq i-2\).  The two boundary coefficients are
\begin{align*}
 [p_i]:&\quad n-i-1,\\
 [p_{i-1}]:&\quad
 2(n-i-2)-3(n-i)=-(n-i+4).
\end{align*}
Consequently,
\[
 \Delta^3\widehat Y_i
 =(n-i-1)p_i-(n-i+4)p_{i-1}
 =\mathsf W_i=\Delta^3\mathsf Y_i,
\]
where the middle equality is precisely \eqref{eq:p-recurrence}.
Both sequences vanish at the three preceding indices
\(-1,-2,-3\), so induction on the third-difference identity gives
\(\widehat Y_i=\mathsf Y_i\) for every \(0\leq i\leq n-2\).
Therefore \(\widehat Z_i=\mathsf Z_i\), the system
\eqref{eq:G-system} holds, and \(H_{ii}=\alpha\) for
\(0\leq i\leq n-2\).

Only the last two diagonal entries remain.  Let
\(v_i=(-r)^i\).  The geometric sum and \(h=1+r\) give \(Bv=\one\).
Since \(C\one=0\),
\begin{equation}
 \sum_{i=0}^{n}H_{ii}r^{2i}=v^\top Hv=0.            \label{eq:weighted-diag}
\end{equation}
Also, using \(\one^\top C=b^\top\),
\begin{equation}
 \sum_{i=0}^{n}H_{ii}=\one^\top H\one
 =2b^\top B\one.                                   \label{eq:sum-diag}
\end{equation}
The target values in \eqref{eq:H-target} satisfy both identities.  Indeed,
\begin{align*}
 \alpha\sum_{i=0}^{n-1}r^{2i}-n(1+R)r^{2n}
 &=\frac{R^2-1}{hR^2}-\frac{n(1+R)}{R^2}=0,\\
 n\alpha-n(1+R)&=-nh(n+1)
 =2\left(\sum_{i=0}^{n-1}(1+hi)-n(1+hn)\right).
\end{align*}
The first equality uses \(R-1=nh\).  The determinant of the two equations for
\((H_{n-1,n-1},H_{nn})\) is
\(r^{2(n-1)}(r^2-1)\neq0\), so the two remaining entries are uniquely the
target values.
\end{proof}

\subsection{The interpolation sum}

We can now prove the conjecture, conditional only on the positivity result of
the next section.

\begin{theorem}[Constant-step \(s\)-composability]
\label{thm:main}
For every \(n\geq1\), let \(r\in(0,1)\) solve \eqref{eq:root}, and put
\(h=1+r\), \(\eta=r^n\).  Then the constant length-\(n\) schedule
\((h,\ldots,h)\) is \(s\)-composable with rate \(\eta\).
\end{theorem}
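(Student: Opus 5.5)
By \Cref{lem:reduction}, it suffices to prove \eqref{eq:scalar-target} for every trajectory of \eqref{eq:gd}. The plan is to exhibit the left side minus the right side as an explicit nonnegative combination of the interpolation inequalities \eqref{eq:Qij} among the sampled points \(0,\ldots,n\). Take the matrix \(C\) from \eqref{eq:C-H} and set \(\lambda_{ij}=C_{ij}\) for \(i\neq j\); a transpose convention may be needed, to be fixed by the calculation below. Nonnegativity of the \(\lambda_{ij}\) comes from two facts. Every nonzero off-diagonal entry of an interval mode is positive, as noted in the proof of \Cref{lem:mode}. The weights \(p_a\) and \(q_i\) are nonnegative, which is the content of the positivity results of \Cref{sec:positivity} and which I invoke as given. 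Hence \(\sum_{i\neq j}\lambda_{ij}Q_{ij}\geq0\).

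Next, expand this sum and show that it equals exactly \eqref{eq:intro-certificate}, i.e.\ \(2\sum_{i<n}(f_i-f_n)-r\sum_{i<n}\norm{g_i}^2-nR\norm{g_n}^2\). I will treat the three kinds of terms in turn.

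\emph{Function values.} The coefficient of \(f_k\) is \(2\bigl(\sum_j\lambda_{kj}-\sum_i\lambda_{ik}\bigr)\). Since \(C\) has zero row sums, the diagonal of \(C\) is minus the off-diagonal row sum. Therefore this coefficient equals \(-2\) times the \(k\)-th column sum of \(C\), or plus that, depending on orientation. By \Cref{lem:matching} the column sums are \(b=(1,\ldots,1,-n)\), which yields \(2\sum_{i<n}f_i-2nf_n\) once the orientation is fixed correctly.

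\emph{Inner products and squared norms.} Write \(x_i=x_0-(Bg)_i+g_i\) using \eqref{eq:Bg}. The terms \(-2\ip{g_j}{x_i-x_j}-\norm{g_i-g_j}^2\) combine to \(2\ip{g_j}{(Bg)_i-(Bg)_j}-2\ip{g_j}{g_i-g_j}-\norm{g_i-g_j}^2\). The last two pieces sum to \(\norm{g_j}^2-\norm{g_i}^2\), and these cancel across the weighted sum because of the zero-row-sum and column-sum identities, apart from a contribution involving \(b\). The remaining bilinear form in \(g\) is, up to sign, \(g^\top(CB+B^\top C^\top)g\) in the Gram sense, plus that diagonal correction. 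By \Cref{lem:matching}, \(H\) is diagonal with \(H_{ii}=\alpha\) for \(i<n\) and \(H_{nn}=-n(1+R)\).

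I then need to check that the correction shifts these diagonal values to \(-r\) and \(-nR\). The correction is \(-b_k\norm{g_k}^2\) or \(+b_k\norm{g_k}^2\), depending on orientation. Indeed \(\alpha-1=-r\), and \(-n(1+R)+n=-nR\), so the shifts are exactly the claimed ones.

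The main bookkeeping obstacle is this sign and orientation check: which index of \(C\) is the ``\(i\)'' and which the ``\(j\)'' of \(Q_{ij}\), and the resulting factor of \(-1\). I would settle it first on \(n=1\), where \(C=p_0C^{[0,1]}\) with \(p_0=1/\alpha\). There I would verify directly that \(\lambda_{01}Q_{01}+\lambda_{10}Q_{10}\) reproduces \(2(f_0-f_1)-r\norm{g_0}^2-R\norm{g_1}^2\), and then fix the convention for general \(n\).

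With the identity in hand, the left side of \eqref{eq:scalar-target} minus its right side equals \(\sum\lambda_{ij}Q_{ij}\geq0\). This holds in any dimension because only the inequalities \eqref{eq:Qij} were used. \Cref{lem:reduction} then gives \eqref{eq:intro-main-potential} together with the rate identity \(\eta=(1+nh)^{-1}=(h-1)^n\), which is \(s\)-composability. The genuinely hard input is the positivity of the \(p_a\) and \(q_i\), which is deferred to \Cref{sec:positivity}.
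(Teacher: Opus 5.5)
Your proposal is the paper's proof. It uses the same multipliers read off from the off-diagonal entries of $C$, the same rewriting of $Q_{ij}$ through $\beta=Bg$, the zero row sums, the column sums $b$ and the diagonal $H$ from \Cref{lem:matching}, and then \Cref{lem:reduction}. The orientation you leave open is $\lambda_{ij}=C_{ji}$, which gives $\sum_{i,j}C_{ji}Q_{ij}=2\sum_k b_kf_k+g^\top Hg-\sum_k b_k\norm{g_k}^2$; the other choice is not a mere sign flip (the row sums would then fail to cancel the $\ip{g_j}{\beta_j}$ terms, and the Gram part would not be $\pm g^\top Hg$), so you do need to settle it rather than leave it as ``up to sign.''
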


\begin{proof}
By \Cref{thm:weights-positive}, the weights \(p_a,q_i\) in
\eqref{eq:p-recurrence}--\eqref{eq:p-last} are positive.  Hence every
off-diagonal entry of \(C\) is nonnegative.  Define
\[
 \lambda_{ij}=C_{ji}\quad(i\neq j),
\]
and set \(\lambda_{ii}=0\), since \(Q_{ii}=0\).

Put \(\beta_i=(Bg)_i\).  From \eqref{eq:Bg},
\[
 x_i-x_j=(\beta_j-g_j)-(\beta_i-g_i).
\]
Substitution into \eqref{eq:Qij} gives the exact identity
\begin{equation}
 Q_{ij}=2f_i-2f_j+2\ip{g_j}{\beta_i-\beta_j}
                -\norm{g_i}^2+\norm{g_j}^2.         \label{eq:Q-B}
\end{equation}
Sum \(C_{ji}Q_{ij}\) over all \(i,j\).  The zero row sums eliminate the
terms containing \(\beta_j\), while the column sums are \(b\).  Therefore
\begin{align*}
 \sum_{i,j}C_{ji}Q_{ij}
 &=2\sum_i b_if_i+2g^\top CBg-\sum_i b_i\norm{g_i}^2\\
 &=2\sum_i b_if_i+g^\top Hg-\sum_i b_i\norm{g_i}^2.
\end{align*}
Using \eqref{eq:b} and \eqref{eq:H-target}, this is exactly
\begin{equation}
 2\sum_{i=0}^{n-1}(f_i-f_n)
 -r\sum_{i=0}^{n-1}\norm{g_i}^2-nR\norm{g_n}^2.
\end{equation}
Every \(Q_{ij}\geq0\) and every off-diagonal multiplier is nonnegative, so
the expression is nonnegative.  This is \eqref{eq:scalar-target}, and
\Cref{lem:reduction} completes the proof.
\end{proof}

\section{Positivity of every certificate weight}
\label{sec:positivity}

This section proves the only inequality statement needed by the certificate.
All quantities below are the explicit scalars defined in
\eqref{eq:parameters} and \eqref{eq:ZY-def}--\eqref{eq:p-last}.  We use
\begin{equation}
 u=r^2,\qquad d=1-u=\alpha h,\qquad R=r^{-n}=1+nh.
 \label{eq:positivity-notation}
\end{equation}

\subsection{Bounds on the balancing root}

\begin{lemma}[Three root bounds]
\label{lem:root-bounds}
For \(n\geq2\), \(r\geq\tfrac12\).  For \(n\geq3\),
\begin{equation}
 nd>2,
 \qquad
 (n+4)d>4.
 \label{eq:root-bounds}
\end{equation}
\end{lemma}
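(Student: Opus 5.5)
Since \(\phi(r)=r^n(1+n(1+r))\) is strictly increasing on \((0,\infty)\) and \(r\) is its unique root of \(\phi(r)=1\) in \((0,1)\), every lower bound \(r>r_0\) reduces to checking \(\phi(r_0)<1\). Moreover \(d=1-r^2\) is decreasing in \(r\), so an upper bound \(d>d_0\) is equivalent to a bound \(r<r_1\), i.e.\ to \(\phi(r_1)>1\). I would therefore turn each claim into one test value of \(r\) and verify a single elementary inequality there.

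For \(r\geq\tfrac12\) when \(n\geq2\), check \(\phi(\tfrac12)\leq1\), that is, \(2^{-n}(1+\tfrac{3n}{2})\leq1\), or \(2+3n\leq2^{n+1}\). At \(n=2\) this reads \(8\leq8\), giving equality and hence \(r=\tfrac12\), consistent with \(h=3/2\). Induction then gives strict inequality for \(n\geq3\): the left side grows by \(3\) per step and the right side by \(2^{n+1}\geq8\).

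For \(nd>2\), the target is \(r<r_1:=\sqrt{1-2/n}\), i.e.\ \(\phi(r_1)>1\). Write \(r_1^n=(1-2/n)^{n/2}\). I would use \(\log(1-x)\geq -x-\tfrac{x^2}{2(1-x)}\), or more crudely the monotonicity of \((1-2/n)^{n/2}\) increasing toward \(e^{-1}\), to get \(r_1^n\geq(1-2/3)^{3/2}=3^{-3/2}\) for \(n\geq3\). The required inequality is then \(r_1^n(1+n+nr_1)>1\). Since \(1+n+nr_1\geq n+1\), it suffices that \((n+1)(1-2/n)^{n/2}>1\). At \(n=3\) this is \(4\cdot3^{-3/2}\approx0.77\), which fails, so the crude bound is not enough and I must keep the \(nr_1\) term. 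At \(n=3\), \(r_1=1/\sqrt3\), and \(\phi=3^{-3/2}(4+\sqrt3)\approx1.10>1\). For \(n\geq4\) I would use \(r_1^n\geq(1/2)^{2}=1/4\) (the value at \(n=4\), by monotonicity) together with \(r_1\geq1/\sqrt2\), which gives \(\phi(r_1)\geq\tfrac14(1+n(1+0.707))>1\) once \(n\geq3\).

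For \((n+4)d>4\), set \(r_2=\sqrt{1-4/(n+4)}=\sqrt{n/(n+4)}\) and show \(\phi(r_2)>1\). Here \(r_2^n=(1+4/n)^{-n/2}\) decreases toward \(e^{-2}\) and is at least \(e^{-2}\). So it suffices that \(e^{-2}(1+n(1+r_2))>1\), i.e.\ \(n(1+r_2)>e^2-1\approx6.39\). For \(n\geq4\) this holds since \(r_2\geq1/\sqrt2\) gives \(4\cdot1.707>6.39\). For \(n=3\) I check directly: \(r_2=\sqrt{3/7}\approx0.655\) and \(r_2^3\approx0.280\), so \(\phi\approx0.280\cdot(1+3\cdot1.655)\approx1.67>1\).

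The main obstacle is bookkeeping at the small case \(n=3\), where the crude asymptotic bounds fail and explicit arithmetic is needed. I would present \(n=3\) as exact rational or radical comparisons (for instance, squaring \(3^{-3/2}(4+\sqrt3)>1\) to get \((4+\sqrt3)^2>27\), i.e.\ \(19+8\sqrt3>27\), i.e.\ \(\sqrt3>1\)). For \(n\geq4\), I would rely on the monotone limits \((1-2/n)^{n/2}\uparrow e^{-1}\) and \((1+4/n)^{-n/2}\downarrow e^{-2}\), both of which follow from concavity of \(\log\).
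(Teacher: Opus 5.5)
Your proposal is correct and follows essentially the same route as the paper. Both use the monotonicity of \(\phi(x)=x^n(1+n(1+x))\) at the test points \(\tfrac12\), \(\sqrt{1-2/n}\), and \(\sqrt{n/(n+4)}\), the monotonicity of \((1-2/x)^{x/2}\) to get \(r_1^n\geq\tfrac14\) for \(n\geq4\), the bound \((1+4/n)^{n/2}<e^2\) from \(\log(1+z)<z\), and direct checks at \(n=3\). The one difference is that in the third bound you handle all \(n\geq4\) at once via \(r_2\geq1/\sqrt2\), which needs \(e^2<5+2\sqrt2\), whereas the paper uses \(e^2<8\) for \(n\geq5\) and checks \(n=4\) separately. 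Your version works, and the paper's elementary estimate \(e^2\leq 23/3\) already suffices for it, but you should replace your decimal approximations (for \(e^2-1\) and for \(\phi(\sqrt{3/7})\)) with explicit bounds like this.
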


\begin{proof}
Let
\[
 F_n(x)=x^n\bigl(1+n(1+x)\bigr).
\]
This function is strictly increasing for \(x>0\), and \(F_n(r)=1\).
For \(n\geq2\),
\[
 F_n(1/2)=2^{-n}\left(1+\frac{3n}{2}\right)\leq1.
\]
The last inequality is equality at \(n=2\); if
\(1+3n/2\leq2^n\), then
\(1+3(n+1)/2\leq2^{n+1}\), because
\(2(1+3n/2)-(1+3(n+1)/2)=(3n-1)/2>0\).
Thus \(r\geq1/2\).

To prove \(nd>2\), first suppose \(n\geq4\) and set
\(s=(1-2/n)^{1/2}\).  The function
\[
 a(x)=\left(1-\frac2x\right)^{x/2},\qquad x\geq4,
\]
is increasing.  Indeed,
\[
 \frac{d}{dx}\log a(x)
 =\frac12\log\left(1-\frac2x\right)+\frac1{x-2}\geq0,
\]
where \(\log(1-y)\geq-y/(1-y)\) for \(0<y<1\) was used with
\(y=2/x\).  Hence \(s^n=a(n)\geq a(4)=1/4\), and
\[
 F_n(s)=s^n\bigl(1+n(1+s)\bigr)
 >\frac{n+1}{4}>1.
\]
For \(n=3\), \(s=1/\sqrt3\) and
\[
 F_3(s)=\frac{4+\sqrt3}{3\sqrt3}>1.
\]
Monotonicity of \(F_n\) therefore gives \(r<s\), or \(nd>2\).

For the second inequality in \eqref{eq:root-bounds}, set
\(s=(n/(n+4))^{1/2}\).  If \(n\geq5\), then \(s>1/2\) and
\[
 s^{-n}=\left(1+\frac4n\right)^{n/2}<e^2<8
 <1+\frac{3n}{2}<1+n(1+s).
\]
Here \(\log(1+z)<z\) proves the first strict inequality.  For a
fully elementary bound on the constant, \(k!\geq6\,4^{k-3}\) for
\(k\geq3\), and hence
\[
 e^2=1+2+2+\sum_{k\geq3}\frac{2^k}{k!}
 \leq5+\frac83<8.
\]
Thus \(F_n(s)>1\).  The two remaining cases are direct:
\[
 F_4(1/\sqrt2)=\frac{5+2\sqrt2}{4}>1,
\]
whereas for \(n=3\), writing \(s=\sqrt{3/7}\) gives
\[
 F_3(s)=\frac{12s}{7}+\frac{27}{49}>1
\]
because \(s>11/42\).  Again \(r<s\), which is
\((n+4)d>4\).
\end{proof}

\subsection{Suffix weights}

The recurrence for the suffix weights admits the following form.  Its
finite-difference derivation, including both boundary indices, is given in
\Cref{app:suffix-closed-form}.

\begin{lemma}[Closed form for the nonterminal suffix weights]
\label{lem:p-closed-form}
For \(n\geq2\),
\begin{equation}
 p_0=\frac{r^2+2r-1}{nr^2(n-1)h}.                 \label{eq:p0-closed}
\end{equation}
For \(1\leq i\leq n-2\), put \(t=n-i\), so \(2\leq t\leq n-1\).  Then
\begin{equation}
 p_i=\frac{6\mathsf A_t-\mathsf D_tR^2u^{t-1}}
 {h^3(t-1)t(t+1)(t+2)(t+3)},                      \label{eq:p-closed}
\end{equation}
where
\begin{align}
 \mathsf A_t
 &=2h^2n(2n-t)+h\bigl(2n(r+5)-t(r+3)\bigr)
       +r^2+4r+7,                                  \label{eq:At}\\
 \mathsf D_t
 &=t^3d^3+6t^2d^2+td(18-6d-d^2)+24-18d.           \label{eq:Dt}
\end{align}
\end{lemma}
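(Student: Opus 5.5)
The recurrence \eqref{eq:p-recurrence} is first-order in \(p\) with forcing \(\mathsf W_i=\Delta^3\mathsf Y_i\), where \(\Delta\) is the backward difference. The plan is to verify the claimed closed form by substituting it into that recurrence, not to derive it from scratch.

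\emph{Base case.} At \(i=0\) we have \(p_{-1}=0\) and \(\mathsf Y_{-1}=\mathsf Y_{-2}=\mathsf Y_{-3}=0\). Hence \(p_0=\mathsf Y_0/(n-1)=\mathsf Z_0/(n(n-1))\), with \(\mathsf Z_0=(3+r-\alpha r^{-2})/h^2\). Writing \(\alpha=1-r\) gives \(r^2(3+r)-(1-r)=r^3+3r^2+r-1=(1+r)(r^2+2r-1)\). Dividing by \(h^2=(1+r)^2\) yields \eqref{eq:p0-closed}. This is a two-line check.

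\emph{Inductive step for \(1\le i\le n-2\).} Multiply \eqref{eq:p-recurrence} by \(n-i-1=t-1\), so that it reads \((t-1)p_i-(t+4)p_{i-1}=\mathsf W_i\). Set \(P_t=h^3(t-1)t(t+1)(t+2)(t+3)p_i\). The left side then becomes
\[
\frac{P_t}{h^3t(t+1)(t+2)(t+3)}-\frac{P_{t+1}}{h^3(t+1)(t+2)(t+3)}.
\]
This is where the factor \((t+4)\) cancels against the extra factor \(t+4\) in the denominator of \(p_{i-1}\); this cancellation is presumably the reason for the quintic denominator.

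On the right, \(\mathsf W_i\) is a third difference of \(\mathsf Y_j=\mathsf Z_j/(n-j)\). Each \(\mathsf Z_j\) splits into a part linear in \(j\) and a geometric part \(-\alpha r^{-2j-2}/h^2\). Using \(R^2u^{t-1}=r^{-2n}r^{2(n-i)-2}=r^{-2i-2}\), the geometric part is exactly the \(R^2u^{t-1}\) scale appearing in \eqref{eq:p-closed}.

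The verification therefore splits into two independent identities.
\begin{enumerate}[(a)]
\item A rational identity in \(t\) and \(n\) involving \(\mathsf A_t\) and \(\mathsf A_{t+1}\), matched against the third difference of \((2hj+3+r)/(h^2(n-j))\).
\item An identity of the form \(\mathsf D_t-u^{-1}\cdot(\text{shifted }\mathsf D_{t+1}\text{ term})\propto\) the third difference of \(u^{-j}/(n-j)\).
\end{enumerate}
They separate because \(u^{-i}\) and the rational functions are linearly independent as functions of \(i\). For each part I would clear denominators \((t-1)t\cdots(t+3)\) and compare the resulting polynomials in \(t\) (and \(n\), \(h\)). In (b), \(u\) is eliminated via \(u=1-d\), which explains why \(\mathsf D_t\) is written as a polynomial in \(d\).

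\emph{Boundary index \(i=1\) and small \(i\).} For \(i\le 2\) some of \(\mathsf Y_{i-1},\mathsf Y_{i-2},\mathsf Y_{i-3}\) are zero by convention. These values do not coincide with the formula for \(\mathsf Y_j\) extended to \(j<0\): the formula gives \(\mathsf Z_{-1}=0\), but \(\mathsf Z_{-2}\neq0\). The step from \(p_0\) to \(p_1\) therefore needs the closed form for \(p_0\), which differs from \eqref{eq:p-closed} evaluated at \(t=n\). I would check \(i=1\) directly against \eqref{eq:p0-closed}.

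For \(i=2\) the third difference involves \(\mathsf Y_{-1}=0\), which is consistent with the extended formula. For \(i\ge3\) the generic identity applies. I would also check whether the generic identity happens to absorb the \(i=2\) case.

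\emph{Main obstacle.} I expect the hard step to be the bookkeeping in (b). The third difference of \(u^{-j}/(n-j)\) produces the rational factors \(1/(t+k)\) for \(k=0,\dots,3\) with geometric weights \(1,u,u^2,u^3\). Matching these to the cubic \(\mathsf D_t\) requires careful expansion in \(d\). I would first test the identity symbolically at fixed small \(t\) to confirm normalizations, including the factor \(6\) and the sign convention of \(\Delta^3\), before doing the general expansion.
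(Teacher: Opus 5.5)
Your plan is correct, and it runs in the opposite direction from the paper. The paper \emph{derives} the formula. Multiplying the recurrence by $F(t)=t(t+1)(t+2)(t+3)$ telescopes it to $p_i=\mathsf S_i/((t-1)F(t))$ with $\mathsf S_i=\sum_{j\le i}F(n-j)\mathsf W_j$. Summation by parts then turns $\mathsf S_i$ into $24\sum_{j\le i-3}\mathsf Z_j$ plus three boundary terms, the geometric series is summed, and everything is collected into $6\mathsf A_t-\mathsf D_tR^2u^{t-1}$.

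You instead \emph{verify} the differenced relation $P_t-P_{t+1}=h^3F(t)\mathsf W_i$, which is considerably lighter. Your part (a) reduces to $6(\mathsf A_t-\mathsf A_{t+1})=6\gamma=6h(2hn+3+r)$. This holds because $\Delta^3$ kills the constant $-2/h$ in $(2hj+3+r)/(h^2(n-j))$, and $\frac1t-\frac3{t+1}+\frac3{t+2}-\frac1{t+3}=6/F(t)$. Your part (b) reduces to $\mathsf D_t-u\mathsf D_{t+1}=d\,\mathsf E_t$ with $\mathsf E_t=(t+1)(t+2)(t+3)-3ut(t+2)(t+3)+3u^2t(t+1)(t+3)-u^3t(t+1)(t+2)$. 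Its expansion in $d=1-u$ has coefficients $6$, $6t$, $3t(t+1)$, $t(t+1)(t+2)$, which is exactly \eqref{eq:Et}. So your route proves the closed form and, at the same time, the identities \eqref{eq:A-linear} and \eqref{eq:Et} that the paper later needs for positivity. What it gives up is the paper's account of where $\mathsf A_t$ and $\mathsf D_t$ come from.

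Your boundary analysis is right. The extended $\mathsf Z_{-1}$ vanishes but $\mathsf Z_{-2}$ does not, so the generic step covers $2\le i\le n-2$, and $i=1$ must be checked against the special $p_0$. That check does go through. The recurrence gives $p_1=[n\mathsf Z_1-2(n-3)\mathsf Z_0]/(n(n-1)(n-2))$. With $t=n-1$, the factorization $\mathsf D_t-d(t+2)(t+3)[5-t+2(t-2)d]=((t^3-t)d+24)u^2$ reduces the required identity to an equality of two quadratics in $r$.

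Fix two normalization slips before expanding anything:
\begin{enumerate}[(i)]
\item Since $P_{t+1}=h^3t(t+1)(t+2)(t+3)(t+4)p_{i-1}$, the second term on your left side is $P_{t+1}/(h^3t(t+1)(t+2)(t+3))$; you dropped the factor $t$. The left side is therefore $(P_t-P_{t+1})/(h^3F(t))$.
\item On the common scale $R^2u^{t-1}=r^{-2i-2}$, the geometric combination in (b) is $\mathsf D_t-u\mathsf D_{t+1}$, not one involving $u^{-1}$.
\end{enumerate}
Finally, the linear-independence remark is unnecessary. You only need sufficiency, and (a) and (b) each hold as identities on their own.
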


\begin{lemma}[All suffix weights are positive]
\label{lem:p-positive}
For every \(n\geq1\), all of the weights \(p_0,\ldots,p_{n-1}\) are
strictly positive.
\end{lemma}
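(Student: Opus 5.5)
We split into three groups of weights: \(p_0\), the middle weights \(p_1,\ldots,p_{n-2}\) given by \Cref{lem:p-closed-form}, and the terminal weight \(p_{n-1}\) from \eqref{eq:p-last}. The small horizons are handled directly first. For \(n=1\), \(p_0=1/\alpha>0\) because \(r<1\). For \(n=2\), there is no middle range, and \(p_0,p_1\) are evaluated explicitly at \(r\). Here \(r\) is the root of \(r^2(3+2r)=1\), which is \(r=1/2\) and \(h=3/2\). Then \(r^2+2r-1=1/4>0\), and \(p_1\) becomes a rational number that we check numerically.

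\textbf{The weight \(p_0\).} For \(n\geq2\), \eqref{eq:p0-closed} shows that \(p_0\) has the sign of \(r^2+2r-1\). This is positive exactly when \(r>\sqrt2-1\). The bound \(r\geq\tfrac12\) from \Cref{lem:root-bounds} gives this.

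\textbf{The middle weights.} For \(1\leq i\leq n-2\) we use \eqref{eq:p-closed}. The denominator is positive, so it suffices to prove
\[
 6\mathsf A_t>\mathsf D_tR^2u^{t-1},\qquad 2\leq t\leq n-1.
\]
Since \(R^2=u^{-n}\), the right side is \(\mathsf D_tu^{t-1-n}\). We would write \(t=n-i\) and treat the inequality as a statement in the continuous variable \(t\in[2,n-1]\). Take logarithms and consider
\[
 \phi(t)=\log(6\mathsf A_t)-\log\mathsf D_t-(t-1-n)\log u.
\]
The last term is linear in \(t\). \(\mathsf A_t\) is linear and decreasing in \(t\), so \(\log\mathsf A_t\) is concave. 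For \(\log\mathsf D_t\), the cubic \(\mathsf D_t\) is close to \((td+2)^3\) plus lower-order corrections, so we aim to show \(-\log\mathsf D_t\) is concave, or at least that \(\phi\) is strictly concave overall. This matches the ``strict concavity of a logarithmic ratio'' promised in the introduction. With concavity in hand, positivity on the interval reduces to the two endpoints \(t=2\) and \(t=n-1\).

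At \(t=n-1\) (that is, \(i=1\)), the factor \(u^{t-1-n}=u^{-2}\) is mild, and the inequality becomes a polynomial inequality in \(r\) and \(n\). We would discharge it using \(r\geq1/2\), \(nd>2\) and \((n+4)d>4\). At \(t=2\) (that is, \(i=n-2\)), the factor is \(u^{1-n}=R^2u\), which is huge. We must then use \(R=1+nh\) to rewrite it as the polynomial \((1+nh)^2r^2\). After this substitution the comparison is again polynomial in \(n\) and \(r\), and we would prove it with the root bounds of \Cref{lem:root-bounds}.

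\textbf{The terminal weight \(p_{n-1}\).} For this weight we would avoid expanding the sum in \eqref{eq:p-last}. Instead we obtain it from the \(n\)-th diagonal identity. Then \(p_{n-1}\) is expressed through \(\mathsf Z_{n-2}\), \(\mathsf Z_{n-3}\), and the two boundary equations \eqref{eq:weighted-diag}--\eqref{eq:sum-diag}. This should give a closed rational expression in \(r\), \(n\) and \(R\). We would then prove it positive with the bound \(nd>2\) and monotonicity of a rational ratio in \(r\).

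\textbf{Main obstacle.} We expect the hardest step to be the concavity of \(\phi\), in particular controlling \(\log\mathsf D_t\). If direct concavity fails, the fallback is to bound \(\mathsf D_t\le(td+2)^3\cdot\kappa\) for an explicit constant \(\kappa\), and then use concavity of the simpler comparison function.
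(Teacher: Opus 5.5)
\textbf{Gap: the middle weights.} Your treatment of \(n=1,2\) and of \(p_0\) is fine and matches the paper. The argument for the middle weights, however, rests on a concavity claim that is false. In fact \(\log\mathsf D_t\) is strictly \emph{concave} in \(t\), so \(-\log\mathsf D_t\) is strictly convex, which is the opposite of what you need. Put \(x=td\), so that \(\mathsf D=x^3+6x^2+(18-6d-d^2)x+24-18d\). A direct computation gives
\[
 (\partial_x\mathsf D)^2-\mathsf D\,\partial_x^2\mathsf D
 =3x^4+24x^3+72x^2+(72+36d-12d^2)x+36+12d^3+d^4>0 .
\]
Hence \(\phi''(t)=-\gamma^2/\mathsf A_t^2+d^2\bigl[(\partial_x\mathsf D)^2-\mathsf D\,\partial_x^2\mathsf D\bigr]/\mathsf D^2\), where \(\gamma=-\partial_t\mathsf A_t=2h^2n+h(r+3)\). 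The convex term wins. For \(n=10\) (\(r\approx0.747\), \(d\approx0.442\)) one gets \(\phi''(2)\approx-0.003+0.030>0\) and \(\phi''(9)\approx-0.007+0.013>0\). So \(\phi\) is convex there, and positivity at \(t=2\) and \(t=n-1\) tells you nothing about the interior. Your fallback does not help, because \(-3\log(td+2)\) is also convex. Note also that the ``strict concavity of a logarithmic ratio'' in the introduction refers to the adjacent weights, via \(\Phi\) in \Cref{lem:q-positive}, not to the \(p_i\).

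\textbf{Missing idea: monotonicity, not concavity.} The ratio \(\mathsf A_t/(\mathsf D_tu^{t-1})\) is strictly increasing in \(t\), so only the single endpoint \(t=2\) must be checked; \(t=n-1\) is never needed. The paper proves this discretely. Set \(\mathsf E_t=(\mathsf D_t-u\mathsf D_{t+1})/d\) and use \(\mathsf A_{t+1}=\mathsf A_t-\gamma\). Then \(\mathsf A_{t+1}\mathsf D_t-u\mathsf A_t\mathsf D_{t+1}=d\mathsf A_t\mathsf E_t-\gamma\mathsf D_t\). This is positive because \(\mathsf A_t>\gamma(2n-t)\), because of the factored gap \((4-3d)\mathsf E_t>\mathsf D_t\), and because \((n+4)d>4\) by \Cref{lem:root-bounds}. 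The endpoint \(t=2\) follows from \(\mathsf A_2=4R^2-2hR-h^2\), \(\mathsf D_2=6\mathsf K\), the factorization \(4-u\mathsf K=d(2-u)(1+d^2)\), and \(nd>2\).

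\textbf{The terminal weight.} Your plan for \(p_{n-1}\) is only a sketch, with no expression actually produced, and it is unnecessary. In \eqref{eq:p-last} every bracket satisfies \(\alpha-\frac h2(n-1-a)\le\frac{1-3r}{2}<0\), since \(r\ge\frac12\). So once \(p_0,\dots,p_{n-2}>0\), the numerator exceeds \(n\) and \(p_{n-1}>0\).
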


\begin{proof}
For \(n=1\), the sole weight is \(p_0=1/\alpha>0\).  Suppose first that
\(n\geq2\).  By \Cref{lem:root-bounds}, \(r\geq1/2\), and hence
\(r^2+2r-1\geq1/4>0\).  Formula \eqref{eq:p0-closed} gives \(p_0>0\).

It remains to prove positivity of the numerator in \eqref{eq:p-closed};
this range is nonempty only for \(n\geq3\).  Define
\begin{align}
 \mathsf E_t
 &\coloneqq\frac{\mathsf D_t-u\mathsf D_{t+1}}d  \notag\\
 &=t^3d^3+3t^2d^3+3t^2d^2+2td^3+3td^2+6td+6.
                                                               \label{eq:Et}
\end{align}
Every term in \(\mathsf D_t\) and \(\mathsf E_t\) is positive: in
\eqref{eq:Dt}, \(18-6d-d^2\geq11\) and \(24-18d\geq6\).
A direct expansion, grouped into positive factors, gives
\begin{equation}
 (4-3d)\mathsf E_t-\mathsf D_t
 =3d(1-d)t\bigl[d^2(t+1)(t+2)+2d(t+1)+2\bigr]>0.  \label{eq:DE-gap}
\end{equation}

Put
\begin{equation}
 \gamma=2h^2n+h(r+3).
\end{equation}
Rearranging \eqref{eq:At} gives
\begin{equation}
 \mathsf A_t=\gamma(2n-t)+4hn+r^2+4r+7,
 \qquad \mathsf A_{t+1}=\mathsf A_t-\gamma.       \label{eq:A-linear}
\end{equation}
For every \(2\leq t\leq n-2\), equations
\eqref{eq:Et}--\eqref{eq:A-linear} imply
\begin{align*}
 &\mathsf A_{t+1}\mathsf D_t
       -u\mathsf A_t\mathsf D_{t+1}\\
 &\quad=\mathsf A_t(\mathsf D_t-u\mathsf D_{t+1})
            -\gamma\mathsf D_t\\
 &\quad=\mathsf A_t d\mathsf E_t-\gamma\mathsf D_t\\
 &\quad>\gamma\mathsf E_t
       \bigl(d(2n-t)-(4-3d)\bigr)\\
 &\quad=\gamma\mathsf E_t
       \bigl(d(2n-t+3)-4\bigr)>0.
\end{align*}
The first strict inequality uses
\(\mathsf A_t>\gamma(2n-t)\) and
\(\mathsf D_t<(4-3d)\mathsf E_t\).  For the last one,
\(t\leq n-2\) gives \(2n-t+3\geq n+5\), while
\((n+4)d>4\) by \Cref{lem:root-bounds}.  Consequently,
\begin{equation}
 \frac{\mathsf A_{t+1}}{\mathsf D_{t+1}u^t}
 >\frac{\mathsf A_t}{\mathsf D_tu^{t-1}}.         \label{eq:ratio-increasing}
\end{equation}

It is therefore enough to check the endpoint \(t=2\).  Let
\[
 \mathsf K=d^3+2d^2+3d+4.
\]
Substitution in \eqref{eq:At}--\eqref{eq:Dt}, using \(R=1+nh\), gives
\begin{equation}
 \mathsf A_2=4R^2-2hR-h^2,
 \qquad \mathsf D_2=6\mathsf K.                  \label{eq:t2-values}
\end{equation}
The polynomial needed at this endpoint factors as
\begin{equation}
 4-u\mathsf K=d(2-u)(1+d^2)>d=\alpha h.           \label{eq:t2-factor}
\end{equation}
Furthermore,
\begin{equation}
 \alpha R=\alpha+nd>2+\alpha,
 \qquad
 \alpha R-h=nd-2r>0,                              \label{eq:alphaR}
\end{equation}
where \(nd>2\) and \(r<1\) were used.  The second inequality in
\eqref{eq:alphaR} says \(\alpha>h/R\); combining it with the first gives
\(\alpha R>2+h/R\), or
\begin{equation}
 \alpha R^2>2R+h.                                 \label{eq:alphaR2}
\end{equation}
Now \eqref{eq:t2-values}--\eqref{eq:alphaR2} yield
\begin{align*}
 \mathsf A_2-\mathsf K R^2u
 &=R^2(4-u\mathsf K)-h(2R+h)\\
 &>h\bigl(\alpha R^2-2R-h\bigr)>0.
\end{align*}
Thus \(6\mathsf A_2>\mathsf D_2R^2u\).  The monotonicity
\eqref{eq:ratio-increasing} implies
\(6\mathsf A_t>\mathsf D_tR^2u^{t-1}\) throughout
\(2\leq t\leq n-1\), and \eqref{eq:p-closed} gives \(p_i>0\) for
\(1\leq i\leq n-2\).

Finally, every bracket in the definition \eqref{eq:p-last} obeys
\[
 \alpha-\frac h2(n-1-a)
 \leq\alpha-\frac h2=\frac{1-3r}{2}<0.
\]
All already constructed \(p_a\) are positive.  The numerator of
\eqref{eq:p-last} is therefore larger than \(n\), so \(p_{n-1}>0\).
\end{proof}

\subsection{Adjacent weights}

\begin{lemma}[Closed form and positivity of the adjacent weights]
\label{lem:q-positive}
For \(1\leq k\leq n-1\), set \(t=n-k\).  Then
\begin{equation}
 q_{k-1}=
 \frac{(th+2)^2-r^{2k}\bigl((t+2k)h+2\bigr)^2}
 {2h^3r^{2k}t(t+2)}>0.                            \label{eq:q-closed}
\end{equation}
\end{lemma}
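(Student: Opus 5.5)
The statement has two parts: the closed form \eqref{eq:q-closed}, and its sign. I will do them in that order. The sign turns out to be a short convexity argument; the closed form is the real work.

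\emph{Closed form.} Start from \eqref{eq:q-def} with \(i=k-1\), \(t=n-i-1=n-k\); note the mode index \(i\) crosses the cut with suffix length \(n-i=t+1\), so one must be careful with this shift throughout. The key decomposition of \eqref{eq:sigma} is
\[
 \sigma(s,t')=\alpha(s+1)t'+\tfrac h2K(s,t'),
\]
with \(K\) as in the proof of \Cref{lem:matching}. Summing against \(p_a\) gives
\[
 A_i=\alpha(n-i)M_i+\tfrac h2\mathsf Z_i,
 \qquad
 M_i\coloneqq\sum_{a\le i}p_a(i-a+1).
\]
The \(\mathsf Z_i\) term is already explicit by \eqref{eq:ZY-def}, so everything reduces to a closed form for the linear-kernel convolution \(M_i\).

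A caution: the diagonal equations \eqref{eq:diag-before} do not pin down \(M_i\). Eliminating \(q\) from them only reproduces \eqref{eq:G-system}, which is tautological here.

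Instead I would get \(M_i\) by the same finite-difference device used for \(\widehat Y_i\). Since \(n-2i+a-1=(n-i)-(i-a+1)\), we have
\[
 \mathsf Y_i=(n-i)M_i-N_i,
 \qquad
 N_i=\sum_{a\le i}p_a(i-a+1)^2 .
\]
The second difference of \(M\) is \(p_i-p_{i-1}\) up to boundary terms, and the third difference of \(N\) is local in \(p_i\), \(p_{i-1}\), \(p_{i-2}\). Combining this with the closed form of \Cref{lem:p-closed-form} (and the appendix computation behind it) should give \(M_i\) as a combination of a rational function of \(t\) and a geometric term \(R^2u^{t}\), mirroring \eqref{eq:p-closed}.

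In practice, the cleanest route is probably verification rather than derivation. Take the claimed \(q_{k-1}\), solve \(\alpha q_i=i+1-A_i\) for the implied \(M_i\), and check three things: the implied \(M\) vanishes at \(i=-1\), its second differences reproduce the \(p_i\) of \Cref{lem:p-closed-form}, and the \(i=0\) value matches \(p_0\) from \eqref{eq:p0-closed}. The repeated use of \(r^{-n}=R=1+nh\) converts \(r^{2k}\) into the form \(R^2u^{t}\).

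This bookkeeping, including the endpoint \(k=n-1\) where the suffix weight \(p_{n-2}\) enters, is the step I expect to be the main obstacle.

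\emph{Positivity.} The denominator in \eqref{eq:q-closed} is clearly positive. Since \(n=t+k\), the numerator is positive if and only if
\[
 \frac{th+2}{(t+2k)h+2}>r^k,
\]
that is,
\[
 \psi(k)\coloneqq\log\bigl(2+(n+k)h\bigr)-\log\bigl(2+(n-k)h\bigr)
 <\frac kn\log(1+nh),
\]
using \(r^{-n}=1+nh\). Regard \(\psi\) as a function of real \(k\in[0,n]\). It satisfies \(\psi(0)=0\) and \(\psi(n)=\log(2+2nh)-\log2=\log(1+nh)\). Its second derivative is
\[
 \psi''(k)=h^2\Bigl[\bigl(2+(n-k)h\bigr)^{-2}-\bigl(2+(n+k)h\bigr)^{-2}\Bigr]>0
 \qquad(k>0),
\]
so \(\psi\) is strictly convex. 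A strictly convex function lies strictly below its chord on the open interval \((0,n)\), and that chord is exactly \(\frac kn\log(1+nh)\). This proves the inequality for every integer \(1\le k\le n-1\), i.e. \(q_{k-1}>0\).
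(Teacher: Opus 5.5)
Your positivity argument is the paper's argument with the sign flipped: $\psi(k)=\frac kn\log(1+nh)-\Phi(k)$ for the paper's $\Phi(x)=\log\frac{A-hx}{A+hx}-x\log r$. Your reductions $A_{k-1}=\alpha(t+1)M_{k-1}+\frac h2\mathsf Z_{k-1}$ and $\mathsf Y_i=(n-i)M_i-N_i$ are also correct and match the paper.

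\textbf{The gap is the closed form} \eqref{eq:q-closed} itself. You call it the real work, but you never establish it, and no formula for $M_i$ is ever produced.
\begin{itemize}
\item Your derivation route is vague and slightly wrong. Since $M_i-M_{i-1}=\sum_{a\le i}p_a$, you get $\Delta^2M_i=p_i$ exactly; $p_i-p_{i-1}$ is the third difference. Inverting this would also mean double-summing \eqref{eq:p-closed}, whose denominators are $(t-1)t(t+1)(t+2)(t+3)$.
\item Your verification route is logically sufficient, because $M_{-1}=0$, $M_0=p_0$, $\Delta^2M_i=p_i$ determine $M$. But the rational-plus-geometric identity it needs against Lemma~\ref{lem:p-closed-form} is exactly what you defer as ``bookkeeping''.
\end{itemize}
As written, the formula is asserted, not proved.

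\textbf{The missing idea} is to eliminate $N_i$ through partial sums of $M$ rather than through differences. Let $m=i-a+1$ and $\mathsf H_i=\sum_{j\le i}M_j$. Then $\sum_{a\le i}m(m-1)p_a=2\mathsf H_{i-1}$. Writing $m^2=m+m(m-1)$ gives $N_i=M_i+2\mathsf H_{i-1}$, and therefore
\[
 \mathsf Y_i=(n-i-1)M_i-2\mathsf H_{i-1},
 \qquad\text{i.e.}\qquad
 (n-i-1)\mathsf H_i=(n-i+1)\mathsf H_{i-1}+\mathsf Y_i .
\]
Multiplying by $n-i$ makes this telescope to $(n-i-1)(n-i)\mathsf H_i=\sum_{j\le i}\mathsf Z_j$. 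Hence
\[
 M_{k-1}=\frac{\mathsf Z_{k-1}}{t(t+1)}+\frac{2\sum_{j\le k-2}\mathsf Z_j}{t(t+1)(t+2)} .
\]
This expresses $M_{k-1}$ directly through the explicit $\mathsf Z_j$ and a geometric sum. It needs no appeal to Lemma~\ref{lem:p-closed-form}, and $k=n-1$ is not a special endpoint. Substituting into $\alpha q_{k-1}=k-\alpha(t+1)M_{k-1}-\frac h2\mathsf Z_{k-1}$ and collecting over $2h^3r^{2k}t(t+2)$ gives \eqref{eq:q-closed}.
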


\begin{proof}
The finite-sum derivation of \eqref{eq:q-closed} from \eqref{eq:q-def}
is given in \Cref{app:q-closed-form}; it uses only the already defined
weights \(p_0,\ldots,p_{k-1}\), including the special left boundary
\eqref{eq:p0-closed}.

It remains to prove the strict sign.  Put \(A=nh+2\) and define, for
\(0\leq x\leq n\),
\begin{equation}
 \Phi(x)=\log\frac{A-hx}{A+hx}-x\log r.           \label{eq:Phi}
\end{equation}
Both logarithm arguments are positive.  We have \(\Phi(0)=0\), and the
root equation gives
\[
 \Phi(n)=\log\frac{2}{2(1+nh)}-n\log r
 =\log(r^n)-n\log r=0.
\]
For \(0<x<n\),
\begin{equation}
 \Phi''(x)=h^2\left(\frac1{(A+hx)^2}
                    -\frac1{(A-hx)^2}\right)<0.  \label{eq:Phi-second}
\end{equation}
Strict concavity and the two zero endpoint values imply
\(\Phi(k)>0\) for \(1\leq k\leq n-1\).  Since
\(A-hk=th+2\) and \(A+hk=(t+2k)h+2\), this is precisely
\[
 th+2>r^k\bigl((t+2k)h+2\bigr).
\]
The two sides are positive, so the numerator in \eqref{eq:q-closed} is
strictly positive; its denominator is also positive.
\end{proof}

\begin{theorem}[Certificate-weight positivity]
\label{thm:weights-positive}
For every integer \(n\geq1\), every suffix weight \(p_a\) and every
adjacent weight \(q_i\) in \eqref{eq:C-H} is strictly positive.
\end{theorem}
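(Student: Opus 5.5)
The theorem assembles the two preceding lemmas, so the plan is bookkeeping over the index ranges. The weights in \eqref{eq:C-H} are the suffix weights \(p_0,\ldots,p_{n-1}\) and the adjacent weights \(q_0,\ldots,q_{n-2}\).

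For the suffix weights I will invoke \Cref{lem:p-positive} directly; it is stated for every \(n\geq1\). It already splits into the cases needed: \(n=1\), where \(p_0=1/\alpha\) and \(\alpha=1-r>0\); the left boundary \(p_0\) for \(n\geq2\), via \eqref{eq:p0-closed} and \(r\geq 1/2\); the interior range \(1\leq i\leq n-2\), via the monotone ratio \eqref{eq:ratio-increasing} and the endpoint check at \(t=2\); and the terminal weight \(p_{n-1}\), via \eqref{eq:p-last}, whose brackets are all negative. Before citing it, I will confirm that the interior range is empty for \(n=2\). In that case only \(p_0\) and \(p_1\) occur, and \(p_1\) is covered by the terminal argument, because \(\alpha-h/2=(1-3r)/2<0\) follows from \(r\geq 1/2\) in \Cref{lem:root-bounds}.

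For the adjacent weights I will apply \Cref{lem:q-positive} with \(k=i+1\). As \(i\) ranges over \(0,\ldots,n-2\), \(k\) ranges over \(1,\ldots,n-1\), which is exactly the range where \eqref{eq:q-closed} is asserted strictly positive. For \(n=1\) there are no \(q_i\), so nothing needs proving.

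The one place needing care is logical dependency, since the closed form for \(q_{k-1}\) is derived in the appendix from \eqref{eq:q-def} using \(p_0,\ldots,p_{k-1}\). I will check that this derivation relies only on the explicit formulas \eqref{eq:p0-closed}--\eqref{eq:p-closed} and not on any positivity, so there is no circularity. I will also check that \(p_{n-1}\) never enters any \(q_i\): it does not, because \(q_i\) involves only \(p_a\) with \(a\leq i\leq n-2\). Combining the two lemmas then gives strict positivity of every weight. The main obstacle lies entirely inside the cited lemmas, namely the endpoint inequality \eqref{eq:alphaR2} and the concavity argument for \(\Phi\). At the level of this theorem, the only real risk is an index-range mismatch at the boundary cases \(n=1,2\), which I will check explicitly.
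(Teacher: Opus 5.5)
Your proposal is correct and matches the paper's proof, which simply combines \Cref{lem:p-positive} and \Cref{lem:q-positive} and notes that the adjacent family is empty when \(n=1\). One small correction to your dependency check: the appendix derivation of \eqref{eq:q-closed} does not use the closed forms \eqref{eq:p0-closed}--\eqref{eq:p-closed}. It uses the convolution identity \eqref{eq:Y-convolution} and the Abel sum \eqref{eq:H-Abel}, neither of which involves positivity, so your non-circularity conclusion still holds.
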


\begin{proof}
Combine \Cref{lem:p-positive,lem:q-positive}.  When \(n=1\), the adjacent
family is empty.
\end{proof}

\section{Consequences and equivalent formulations}
\label{sec:consequences}

\subsection{The result at arbitrary smoothness}

\begin{corollary}[Scaled constant-step theorem]
\label{cor:scaled}
Let \(f:\R^d\to\R\) be convex and \(L\)-smooth, with \(L>0\), and let
\(x_\star\in\argmin f\).  For \(r\) determined by \eqref{eq:root}, run
\begin{equation}
 x_{i+1}=x_i-\frac{1+r}{L}\nabla f(x_i),
 \qquad i=0,\ldots,n-1.                            \label{eq:scaled-gd}
\end{equation}
With \(\eta=r^n=(1+n(1+r))^{-1}\), the terminal iterate satisfies
\begin{align}
 &\frac{1-\eta}{2L}\norm{\nabla f(x_n)}^2
 +\frac{\eta^2L}{2}\norm{x_n-x_\star}^2
 +(\eta-\eta^2)\bigl(f(x_n)-f(x_\star)\bigr)     \notag\\
 &\hspace{40mm}\leq
 \frac{\eta^2L}{2}\norm{x_0-x_\star}^2.          \label{eq:scaled-potential}
\end{align}
\end{corollary}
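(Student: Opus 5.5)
The plan is to reduce to the normalized case \(L=1\) by rescaling, and then read off the scaled inequality from \Cref{thm:main}. Given an \(L\)-smooth convex \(f\), I will define \(\tilde f(y)=f(y)/L\). It is convex and \(1\)-smooth, since \(\nabla\tilde f=\nabla f/L\) is \(1\)-Lipschitz. It has the same minimizers as \(f\), so \(x_\star\in\argmin\tilde f\). Gradient descent on \(\tilde f\) with normalized stepsize \(h=1+r\) reads
\[
 x_{i+1}=x_i-h\nabla\tilde f(x_i)=x_i-\frac{1+r}{L}\nabla f(x_i).
\]
This is exactly the iteration \eqref{eq:scaled-gd}, so the two trajectories coincide when they start from the same \(x_0\).

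Next I will apply \Cref{thm:main}, equivalently \eqref{eq:intro-main-potential} via \Cref{lem:reduction}, to \(\tilde f\). The root \(r\) and the rate \(\eta=r^n\) depend only on \(n\), not on \(L\). This gives
\[
 \frac{1-\eta}{2}\frac{\norm{\nabla f(x_n)}^2}{L^2}
 +\frac{\eta^2}{2}\norm{x_n-x_\star}^2
 +(\eta-\eta^2)\frac{f(x_n)-f(x_\star)}{L}
 \leq\frac{\eta^2}{2}\norm{x_0-x_\star}^2.
\]
Multiplying through by \(L>0\) then gives \eqref{eq:scaled-potential} term by term.

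There is essentially no obstacle here. The only points to check are that \(1/L\)-scaling preserves convexity and gives \(1\)-smoothness exactly, that the minimizer set is unchanged, and that the identity \(\eta=r^n=(1+n(1+r))^{-1}\) from \eqref{eq:root} is used unchanged. I will also note that the result holds in every dimension, because the underlying certificate is dimension-free.
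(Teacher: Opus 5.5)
Your proposal is correct and follows the paper's proof exactly: rescale to \(\widetilde f=f/L\), observe that \eqref{eq:scaled-gd} is gradient descent on \(\widetilde f\) with normalized step \(h=1+r\), apply \Cref{thm:main}, and multiply the resulting inequality by \(L\). Your term-by-term bookkeeping of the factors of \(L\) is right.
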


\begin{proof}
The function \(\widetilde f=f/L\) is \(1\)-smooth and convex, and
\(\nabla\widetilde f=\nabla f/L\).  Thus \eqref{eq:scaled-gd} is exactly
\(x_{i+1}=x_i-h\nabla\widetilde f(x_i)\).  Apply \Cref{thm:main} to
\(\widetilde f\), replace
\(\nabla\widetilde f(x_n)\) by \(\nabla f(x_n)/L\) and
\(\widetilde f(x_n)-\widetilde f(x_\star)\) by
\((f(x_n)-f(x_\star))/L\), and multiply the resulting inequality by
\(L\).
\end{proof}

Since every term on the left of \eqref{eq:scaled-potential} is
nonnegative, the same inequality simultaneously gives
\begin{align}
 f(x_n)-f(x_\star)
 &\leq\frac{\eta L}{2(1-\eta)}\norm{x_0-x_\star}^2
 =\frac{L}{2nh}\norm{x_0-x_\star}^2,              \label{eq:f-bound}\\
 \norm{\nabla f(x_n)}^2
 &\leq\frac{\eta^2L^2}{1-\eta}\norm{x_0-x_\star}^2
 =\frac{\eta L^2}{nh}\norm{x_0-x_\star}^2,       \label{eq:g-bound}\\
 \norm{x_n-x_\star}
 &\leq\norm{x_0-x_\star}.                         \label{eq:x-bound}
\end{align}
The equalities use \(1-\eta=nh\eta\).

\begin{proposition}[Sharpness of the mixed potential]
\label{prop:quadratic-tight}
Inequality \eqref{eq:scaled-potential} is attained by the quadratic
\(f(x)=L\norm{x}^2/2\), with \(x_\star=0\), for every initial point.
\end{proposition}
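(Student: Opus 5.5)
We will verify the claim by computing the trajectory of gradient descent on \(f(x)=L\norm{x}^2/2\) explicitly and substituting it into both sides of \eqref{eq:scaled-potential}.

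\emph{Trajectory.} Here \(\nabla f(x)=Lx\), so each step of \eqref{eq:scaled-gd} is
\[
 x_{i+1}=x_i-\tfrac{1+r}{L}Lx_i=-r\,x_i.
\]
Hence \(x_n=(-r)^nx_0\), and using \(\eta=r^n\) we get
\[
 \norm{x_n}^2=\eta^2\norm{x_0}^2.
\]
Consequently
\[
 \norm{\nabla f(x_n)}^2=L^2\eta^2\norm{x_0}^2,
 \qquad
 f(x_n)-f(x_\star)=\tfrac{L}{2}\eta^2\norm{x_0}^2 .
\]

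\emph{Left-hand side.} Substituting into \eqref{eq:scaled-potential} and factoring out \(\tfrac{L}{2}\eta^2\norm{x_0}^2\), the left side becomes
\[
 \tfrac{L}{2}\eta^2\norm{x_0}^2\Bigl[(1-\eta)+\eta^2+(\eta-\eta^2)\Bigr]
 =\tfrac{L}{2}\eta^2\norm{x_0}^2,
\]
because the bracket telescopes to \(1\). This is exactly the right side \(\tfrac{\eta^2L}{2}\norm{x_0-x_\star}^2\), since \(x_\star=0\). So equality holds for every \(x_0\), and also for every \(x_\star\) in the degenerate case where the minimizer is unique.

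\emph{Why the choice of \(\eta\) matters.} The only substantive ingredient is that the contraction factor satisfies \(|h-1|^n=r^n\) and that this equals \(\eta\). This is precisely the second equality in the rate balance \eqref{eq:s-rate}. It is worth recording as a remark: for any other \(\eta\), the quadratic instance would make the inequality strict or violate it, which is why the balancing equation \eqref{eq:root} is sharp on the quadratic side. The companion Huber instance would presumably witness sharpness on the other side, but that is not needed here. No step presents a real obstacle; the only care required is tracking the factor \(L\) correctly in the scaled gradient and function-value terms.
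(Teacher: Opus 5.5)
Your proof is correct and matches the paper's argument. Both compute \(x_{i+1}=-rx_i\), which gives \(\norm{x_n}^2=\eta^2\norm{x_0}^2\); after factoring out \(\tfrac{L}{2}\eta^2\norm{x_0}^2\), the coefficients \((1-\eta)+\eta^2+(\eta-\eta^2)\) sum to \(1\), so equality holds for every \(x_0\).
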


\begin{proof}
For this function, \eqref{eq:scaled-gd} gives
\(x_{i+1}=(1-h)x_i=-rx_i\), hence
\(x_n=(-r)^nx_0\),
\(\norm{x_n}^2=\eta^2\norm{x_0}^2\),
\(\nabla f(x_n)=Lx_n\), and
\(f(x_n)=L\eta^2\norm{x_0}^2/2\).  The left side of
\eqref{eq:scaled-potential}, divided by
\(L\norm{x_0}^2/2\), becomes
\[
 (1-\eta)\eta^2+\eta^4+(\eta-\eta^2)\eta^2
 =\eta^2,
\]
which is the normalized right side.
\end{proof}

\subsection{A relaxed proximal-point energy inequality}

There is a useful conjugate interpretation of the same theorem.  We give
the complete equivalence because it explains why a gradient-descent
certificate can also be read as a relaxed proximal-point estimate.

Work in the \(L=1\) normalization and translate constants so that
\(x_\star=0\) and \(f(x_\star)=0\).  Let \(f^*\) be the Fenchel conjugate
and define
\begin{equation}
 \psi(g)=f^*(g)-\frac12\norm{g}^2,
 \qquad \varphi=\psi^*.                            \label{eq:psi-phi}
\end{equation}
Because \(f\) is \(1\)-smooth and convex, \(f^*\) is \(1\)-strongly
convex, so \(\psi\) is convex.  At a sampled point, put
\begin{equation}
 g_i=\nabla f(x_i),\qquad y_i=x_i-g_i.              \label{eq:y-def}
\end{equation}
Fenchel reciprocity gives \(x_i\in\partial f^*(g_i)\), and therefore
\[
 y_i=x_i-g_i\in\partial\psi(g_i)
 \quad\Longleftrightarrow\quad
 g_i\in\partial\varphi(y_i).
\]
Since \(x_i-y_i=g_i\), this is equivalent to
\begin{equation}
 y_i=\operatorname{prox}_{\varphi}(x_i),
 \qquad x_{i+1}=x_i-h(x_i-y_i).                    \label{eq:rppa}
\end{equation}
Thus gradient descent corresponds to the relaxed proximal point algorithm
with relaxation \(h\); compare the finite-horizon RPPA analysis of
\citet{wang2025rppa}.

\begin{corollary}[Terminal RPPA energy]
\label{cor:rppa}
Every trajectory \eqref{eq:rppa} arising through \eqref{eq:psi-phi}
satisfies
\begin{equation}
 \frac12\norm{x_n}^2+nh\,\varphi(y_n)
 +nh\left(1+\frac{nh}{2}\right)\norm{g_n}^2
 \leq\frac12\norm{x_0}^2.                        \label{eq:rppa-energy}
\end{equation}
\end{corollary}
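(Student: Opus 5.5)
The plan is to obtain \eqref{eq:rppa-energy} directly from \Cref{thm:main}, in the form \eqref{eq:intro-main-potential} with \(L=1\), \(x_\star=0\), \(f(x_\star)=0\). No new certificate is needed; the work is a change of variables through the Moreau envelope.

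\emph{Step 1 (normalize the potential).} Divide \eqref{eq:intro-main-potential} by \(\eta^2\). Use \(1-\eta=nh\eta\), already used in \Cref{lem:reduction}, so that
\[
\frac{1-\eta}{\eta}=nh,
\qquad
\frac{1-\eta}{\eta^2}=nhR=nh(1+nh).
\]
The terminal inequality becomes
\[
\frac{nh(1+nh)}{2}\norm{g_n}^2+\frac12\norm{x_n}^2+nh\,f(x_n)\leq\frac12\norm{x_0}^2.
\]

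\emph{Step 2 (identify \(f\) as a Moreau envelope of \(\varphi\)).} By \eqref{eq:psi-phi}, \(f^*=\psi+\frac12\norm{\cdot}^2\). Since \(f\) is closed and convex and \(\varphi=\psi^*\), conjugating the sum gives the infimal convolution
\[
f=\varphi\,\square\,\tfrac12\norm{\cdot}^2,
\qquad\text{that is,}\qquad
f(x)=\min_y\Bigl\{\varphi(y)+\tfrac12\norm{x-y}^2\Bigr\}.
\]
The minimizer is \(\operatorname{prox}_\varphi(x)\). At \(x=x_n\) the minimizer is \(y_n\), and \(x_n-y_n=g_n\) by \eqref{eq:y-def}. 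Hence
\[
f(x_n)=\varphi(y_n)+\tfrac12\norm{g_n}^2.
\]
I would double-check the constant normalization here. With \(x_\star=0\) we have \(g=0\), hence \(y=0\), and the identity gives \(\varphi(0)=f(0)=0\), which is consistent with the translation. The additive constants in \(f^*\) and \(\varphi\) are therefore compatible with \(f(x_\star)=0\).

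\emph{Step 3 (substitute and collect).} Replace \(nh\,f(x_n)\) by \(nh\,\varphi(y_n)+\frac{nh}{2}\norm{g_n}^2\) in the inequality of Step 1. The total coefficient of \(\norm{g_n}^2\) is then
\[
\frac{nh}{2}+\frac{nh(1+nh)}{2}=nh\Bigl(1+\frac{nh}{2}\Bigr).
\]
This is exactly \eqref{eq:rppa-energy}.

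The only step that needs genuine care is Step 2. One must justify the conjugate identity \((\psi+\frac12\norm{\cdot}^2)^*=\psi^*\square\frac12\norm{\cdot}^2\), namely exactness and attainment of the infimal convolution. This follows from finiteness of \(\frac12\norm{\cdot}^2\) on all of \(\R^d\) together with \(f=f^{**}\). Everything else is bookkeeping.
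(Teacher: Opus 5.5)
Your proposal is correct and is essentially the paper's argument: both divide the terminal potential by $\eta^2$ (using $1-\eta=nh\eta$) and substitute the pointwise identity $f(x_n)=\varphi(y_n)+\tfrac12\norm{g_n}^2$. The only difference is how you justify that identity. The paper obtains it from two Fenchel equalities, for $f$ at $(x_n,g_n)$ and for $\psi,\varphi$ at $(g_n,y_n)$, passing through the intermediate $\psi$-form \eqref{eq:psi-energy}; this avoids the global exact infimal-convolution formula $f=\varphi\,\square\,\tfrac12\norm{\cdot}^2$ that your Step~2 relies on.
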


\begin{proof}
Fenchel equality at \(g_n=\nabla f(x_n)\) gives
\[
 f(x_n)=\ip{g_n}{x_n}-f^*(g_n)
 =\ip{g_n}{x_n}-\frac12\norm{g_n}^2-\psi(g_n).
\]
Insert this identity in \eqref{eq:intro-main-potential}.  The three
quadratic terms on its left combine as
\begin{align*}
 &\frac{1-\eta}{2}\norm{g_n}^2
 +\frac{\eta^2}{2}\norm{x_n}^2
 +(\eta-\eta^2)
    \left(\ip{g_n}{x_n}-\frac12\norm{g_n}^2\right)\\
 &\qquad=\frac12\norm{\eta x_n+(1-\eta)g_n}^2.
\end{align*}
Since \((1-\eta)/\eta=nh\), division by \(\eta^2\) shows that the
potential inequality is equivalent to
\begin{equation}
 \frac12\norm{x_n+nhg_n}^2-nh\,\psi(g_n)
 \leq\frac12\norm{x_0}^2.                        \label{eq:psi-energy}
\end{equation}
Finally, \(g_n\in\partial\varphi(y_n)\) implies
\(\psi(g_n)+\varphi(y_n)=\ip{g_n}{y_n}\), and
\(y_n=x_n-g_n\).  Expanding the left side of
\eqref{eq:psi-energy} therefore gives
\begin{align*}
 &\frac12\norm{x_n+nhg_n}^2
 -nh\bigl(\ip{g_n}{y_n}-\varphi(y_n)\bigr)\\
 &=\frac12\norm{x_n}^2+nh\ip{g_n}{x_n-y_n}
   +\frac{n^2h^2}{2}\norm{g_n}^2+nh\varphi(y_n)\\
 &=\frac12\norm{x_n}^2+nh\left(1+\frac{nh}{2}\right)
   \norm{g_n}^2+nh\varphi(y_n),
\end{align*}
which is \eqref{eq:rppa-energy}.
\end{proof}

\begin{remark}
The argument above is an equivalence for proximal problems obtained from a
smooth convex \(f\) by \eqref{eq:psi-phi}; it is not asserted here as a
worst-case theorem for an arbitrary independently chosen nonsmooth
\(\varphi\).  This distinction avoids enlarging the proved problem class.
\end{remark}

\section{Conclusion}
\label{sec:conclusion}

We proved that the balanced constant schedule of
\citet[Example~2]{grimmer2025composing} is \(s\)-composable at every
horizon.  The proof is an explicit smooth-convex interpolation certificate:
the multiplier matrix is a positive combination of \(n\) suffix modes and
\(n-1\) adjacent modes, its column sums match the function coefficients,
and its symmetrized trajectory product is exactly diagonal.  The
finite-difference and Abel-sum calculations expose all scalar multipliers,
while the root bounds and a strict-concavity argument prove their positivity.

The interval-mode construction may be useful beyond this one conjecture.
It converts a semidefinite certificate search into two one-dimensional
tasks: matching flow across cuts and matching diagonal entries.  A natural
next question is whether other numerically observed PEP certificates admit
similarly sparse interval decompositions, or whether the suffix/adjacent
basis can be generated systematically from a dual semidefinite solution.

The literature-status assertion and the new proof should be treated
separately.  The former is based on the current primary source and a search
of later citing literature through August 31, 2026; an uncatalogued
concurrent manuscript is always possible.  The latter is self-contained,
but this document is an unrefereed research draft.  Independent checking
and peer review remain necessary before the result should be relied upon as
part of the established literature.

\section{Disclosure}
\label{sec:disclosure}
The proof strategy is produced by OpenAI’s GPT-5.6 Sol Ultra through Codex in
response to prompts from the author. Codex was also used to revise
the exposition and prepare the LaTeX manuscript. The author selected the
problem, directed the interactions and revisions, and is the sole named author. The AI system is acknowledged as a reasoning and writing tool, not
as an author. This disclosure is not a substitute for independent expert
mathematical review.
\bibliographystyle{plainnat}
\bibliography{references}
\appendix
\section{Finite-sum derivations and boundary certificates}
\label{app:finite-sums}

\subsection{Derivation of the suffix-weight closed form}
\label{app:suffix-closed-form}

We derive \Cref{lem:p-closed-form} directly from the recurrence, retaining
the left boundary where the generic formula does not apply.  Let
\[
 F(s)=s(s+1)(s+2)(s+3).
\]
For \(i\leq n-2\), put \(t=n-i\).  Equation
\eqref{eq:p-recurrence} is
\[
 (t-1)p_i-(t+4)p_{i-1}=\mathsf W_i.
\]
Multiplication by \(F(t)\) gives
\[
 (t-1)F(t)p_i=(t+4)F(t)p_{i-1}+F(t)\mathsf W_i.
\]
The coefficient \((t+4)F(t)=t(t+1)(t+2)(t+3)(t+4)\) is exactly the
coefficient \((t+1-1)F(t+1)\) attached to \(p_{i-1}\) at the preceding
index.  Since \(p_{-1}=0\), induction therefore yields
\begin{equation}
 p_i=\frac{\mathsf S_i}{(t-1)t(t+1)(t+2)(t+3)},
 \qquad
 \mathsf S_i=\sum_{j=0}^{i}F(n-j)\mathsf W_j.       \label{eq:Si}
\end{equation}

We now evaluate \(\mathsf S_i\).  Recall that
\(\mathsf W_j=\mathsf Y_j-3\mathsf Y_{j-1}
+3\mathsf Y_{j-2}-\mathsf Y_{j-3}\) and
\(\mathsf Y_\ell=\mathsf Z_\ell/(n-\ell)\).  For an interior index
\(\ell\leq i-3\), the coefficient of \(\mathsf Y_\ell\) in
\eqref{eq:Si} is, with \(s=n-\ell\),
\[
 F(s)-3F(s-1)+3F(s-2)-F(s-3)=24s.
\]
After division by \(s\), this contributes \(24\mathsf Z_\ell\).
At the three right boundary indices, direct division by
\(n-\ell\) gives
\begin{align*}
 [\mathsf Z_{i-2}]:\;&
 \frac{F(t+2)-3F(t+1)+3F(t)}{t+2}
 =(t+3)(t^2-3t+8),\\
 [\mathsf Z_{i-1}]:\;&
 \frac{F(t+1)-3F(t)}{t+1}
 =-2(t-2)(t+2)(t+3),\\
 [\mathsf Z_i]:\;&
 \frac{F(t)}{t}=(t+1)(t+2)(t+3).
\end{align*}
Consequently, for \(i\geq2\),
\begin{align}
 \mathsf S_i
 ={}&24\sum_{j=0}^{i-3}\mathsf Z_j
 +(t+3)(t^2-3t+8)\mathsf Z_{i-2} \notag\\
 &-2(t-2)(t+2)(t+3)\mathsf Z_{i-1}
 +(t+1)(t+2)(t+3)\mathsf Z_i.                     \label{eq:Si-expanded}
\end{align}

The remaining sum is geometric.  For every \(L\geq0\),
\begin{align}
 \sum_{j=0}^{L}\mathsf Z_j
 &=\frac{1}{h^2}\left[
 hL(L+1)+(L+1)(3+r)
 -\alpha\sum_{j=0}^{L}r^{-2j-2}\right] \notag\\
 &=\frac{hL(L+1)+(L+1)(3+r)
 -(r^{-2L-2}-1)/h}{h^2}.                          \label{eq:sum-Z}
\end{align}
For the second equality we used
\[
 \alpha\sum_{j=0}^{L}r^{-2j-2}
 =\frac{\alpha}{1-r^2}(r^{-2L-2}-1)
 =\frac{r^{-2L-2}-1}{h}.
\]

For transparency, we now perform the collection that leads to
\eqref{eq:p-closed}.  Define
\begin{align*}
 c_2&=(t+3)(t^2-3t+8),\\
 c_1&=-2(t-2)(t+2)(t+3),\\
 c_0&=(t+1)(t+2)(t+3).
\end{align*}
With \(i=n-t\), the individual terms in \eqref{eq:Si-expanded} are
\begin{equation}
 \mathsf Z_{i-\ell}
 =\frac{2h(n-t-\ell)+3+r
       -\alpha R^2r^{2t+2\ell-2}}{h^2},
 \qquad \ell=0,1,2,                               \label{eq:Z-shifted}
\end{equation}
and \eqref{eq:sum-Z} gives
\begin{align}
 h^3\sum_{j=0}^{i-3}\mathsf Z_j
 ={}&h^2(i-3)(i-2)+h(i-2)(3+r) \notag\\
 &+1-R^2r^{2t+4}.                                 \label{eq:sum-Z-shifted}
\end{align}
When \(i=2\), the sum is empty and the right side is also zero, because
\(R^2r^{2t+4}=R^2r^{2n}=1\).  Thus
\eqref{eq:sum-Z-shifted} also covers that boundary.

Substitution of \eqref{eq:Z-shifted} and
\eqref{eq:sum-Z-shifted} into \eqref{eq:Si-expanded} gives the entirely
expanded identity
\begin{align}
 h^3\mathsf S_i
={}&24\!\left[h^2(i-3)(i-2)+h(i-2)(3+r)
               +1-R^2r^{2t+4}\right] \notag\\
 &+c_2\!\left[h\{2h(i-2)+3+r\}
               -dR^2r^{2t+2}\right] \notag\\
 &+c_1\!\left[h\{2h(i-1)+3+r\}
               -dR^2r^{2t}\right] \notag\\
 &+c_0\!\left[h(2hi+3+r)
               -dR^2r^{2t-2}\right].              \label{eq:Si-fully-expanded}
\end{align}
The coefficient of \(-R^2r^{2t-2}\) on the right is
\begin{align*}
 &24r^6+dc_2r^4+dc_1r^2+dc_0\\
 &\quad=24u^3+d(t+3)(t^2-3t+8)u^2\\
 &\qquad\quad-2d(t-2)(t+2)(t+3)u
       +d(t+1)(t+2)(t+3)\\
 &\quad=t^3d^3+6t^2d^2+td(18-6d-d^2)+24-18d
 =\mathsf D_t,
\end{align*}
where the penultimate line follows by replacing \(u\) with \(1-d\)
and collecting powers of \(d\).  The terms in
\eqref{eq:Si-fully-expanded} not containing \(R^2\), after replacing
\(i\) by \(n-t\), are
\begin{align*}
 &24\!\left[h^2(n-t-3)(n-t-2)
       +h(n-t-2)(3+r)+1\right]\\
 &\quad+c_2h\{2h(n-t-2)+3+r\}\\
 &\quad+c_1h\{2h(n-t-1)+3+r\}
       +c_0h\{2h(n-t)+3+r\}\\
 &=6\left[
 2h^2n(2n-t)+h\{2n(r+5)-t(r+3)\}+r^2+4r+7
 \right]
 =6\mathsf A_t.
\end{align*}
This proves
\[
 \mathsf S_i=\frac{6\mathsf A_t-\mathsf D_tR^2r^{2t-2}}{h^3},
\]
and \eqref{eq:Si} proves \eqref{eq:p-closed} for \(i\geq2\).

At \(i=1\), the interior sum and the \(\mathsf Z_{i-2}\) term are absent.
Writing \(t=n-1\), the exact boundary expression is
\[
 \mathsf S_1
 =-2(t-2)(t+2)(t+3)\mathsf Z_0
 +(t+1)(t+2)(t+3)\mathsf Z_1.
\]
Inserting
\[
 \mathsf Z_0=\frac{3+r-\alpha r^{-2}}{h^2},
 \qquad
 \mathsf Z_1=\frac{2h+3+r-\alpha r^{-4}}{h^2},
\]
and using \(n=t+1\) and \(R^2r^{2t-2}=r^{-4}\) gives the same identity
\[
 h^3\mathsf S_1=6\mathsf A_t-\mathsf D_tR^2r^{2t-2}.
\]
Thus \eqref{eq:p-closed} also holds at \(i=1\).

Finally, at \(i=0\), \(\mathsf W_0=\mathsf Y_0=\mathsf Z_0/n\), so
\begin{align*}
 p_0
 &=\frac{\mathsf Z_0}{n(n-1)}
 =\frac{3+r-\alpha r^{-2}}{h^2n(n-1)}\\
 &=\frac{r^2+2r-1}{nr^2(n-1)h}.
\end{align*}
The last equality follows after multiplying the numerators by \(r^2\):
\[
 r^2(3+r)-\alpha
 =r^3+3r^2+r-1
 =h(r^2+2r-1).
\]
This proves \eqref{eq:p0-closed} and explains why the generic expression
\eqref{eq:p-closed} must not be used at \(i=0\).

\subsection{Derivation of the adjacent-weight closed form}
\label{app:q-closed-form}

Assume \(n\geq2\).  We give a boundary-safe Abel-sum derivation of
\eqref{eq:q-closed}.  From
\eqref{eq:Y-convolution} and
\(\widehat Z_i=\mathsf Z_i\), we have
\begin{equation}
 \mathsf Y_i
 =\sum_{a=0}^{i}p_a(i-a+1)(n-2i+a-1).             \label{eq:Y-again}
\end{equation}
For \(0\leq i\leq n-2\), define
\[
 \mathsf M_i=\sum_{a=0}^{i}(i-a+1)p_a,
 \qquad
 \mathsf H_i=\sum_{j=0}^{i}\mathsf M_j,
 \qquad \mathsf H_{-1}=0.
\]
If \(m=i-a+1\), then
\[
 2\mathsf H_{i-1}
 =\sum_{a=0}^{i}m(m-1)p_a.
\]
The factor multiplying \(p_a\) in \eqref{eq:Y-again} is
\(m(n-i-m)\); hence
\begin{align}
 \mathsf Y_i
 &=(n-i)\mathsf M_i
   -\sum_{a=0}^{i}m^2p_a \notag\\
 &=(n-i-1)\mathsf M_i-2\mathsf H_{i-1}.            \label{eq:Y-MH}
\end{align}
Since \(\mathsf M_i=\mathsf H_i-\mathsf H_{i-1}\), putting
\(\tau_i=n-i-1\) in \eqref{eq:Y-MH} gives
\[
 \tau_i\mathsf H_i=(\tau_i+2)\mathsf H_{i-1}
                    +\mathsf Y_i.
\]
Induction, starting at
\(\mathsf H_0=\mathsf Y_0/(n-1)\), now gives, for
\(0\leq m\leq n-2\),
\begin{equation}
 \mathsf H_m
 =\frac{\sum_{j=0}^{m}(n-j)\mathsf Y_j}
        {(n-m-1)(n-m)}
 =\frac{\sum_{j=0}^{m}\mathsf Z_j}
        {(n-m-1)(n-m)}.                            \label{eq:H-Abel}
\end{equation}
To verify the induction step explicitly, multiply the formula for
\(\mathsf H_{m-1}\) by \(n-m+1\), add \(\mathsf Y_m\), and divide by
\(n-m-1\):
\[
 \mathsf H_m
 =\frac{\sum_{j=0}^{m-1}(n-j)\mathsf Y_j
          +(n-m)\mathsf Y_m}
        {(n-m-1)(n-m)}.
\]

Fix \(1\leq k\leq n-1\) and put \(t=n-k\).  Taking \(i=k-1\) in
\eqref{eq:Y-MH} and using \eqref{eq:H-Abel} gives
\begin{equation}
 \mathsf M_{k-1}
 =\frac{\mathsf Z_{k-1}}{t(t+1)}
 +\frac{2\sum_{j=0}^{k-2}\mathsf Z_j}
        {t(t+1)(t+2)},                             \label{eq:M-explicit}
\end{equation}
where the sum is empty when \(k=1\).

Let \(\mathcal C_k\) be the suffix contribution to the cut after \(k-1\):
\[
 \mathcal C_k
 =\sum_{a=0}^{k-1}p_a\sigma(k-1-a,t+1).
\]
With \(m=k-a\), formula \eqref{eq:sigma} gives
\[
 \mathcal C_k
 =(t+1)\sum_{a=0}^{k-1}p_am
 \left[\alpha+\frac h2(t+1-m)\right].
\]
The first moment is \(\mathsf M_{k-1}\), while
\[
 \sum_{a=0}^{k-1}p_am(t+1-m)
 =t\mathsf M_{k-1}-2\mathsf H_{k-2}
 =\mathsf Y_{k-1}
\]
by \eqref{eq:Y-MH}.  Since
\((t+1)\mathsf Y_{k-1}=\mathsf Z_{k-1}\), it follows that
\begin{equation}
 \mathcal C_k
 =(t+1)\alpha\mathsf M_{k-1}
   +\frac h2\mathsf Z_{k-1}.                       \label{eq:Ck}
\end{equation}

The two finite sums in \eqref{eq:M-explicit} are
\begin{align}
 \mathsf Z_{k-1}
 &=\frac{2hk+\alpha(1-r^{-2k})}{h^2},              \label{eq:Z-k}\\
 \sum_{j=0}^{k-2}\mathsf Z_j
 &=\frac{(k-1)(hk+\alpha)
       -(r^{-2k+2}-1)/h}{h^2}.                    \label{eq:sum-Z-k}
\end{align}
Equation \eqref{eq:Z-k} is obtained from \eqref{eq:ZY-def} using
\(-2h+3+r=\alpha\).  For \(k\geq2\), equation
\eqref{eq:sum-Z-k} is \eqref{eq:sum-Z} with \(L=k-2\).  For \(k=1\),
the sum is empty and the right side of \eqref{eq:sum-Z-k} is zero
directly, so the formula remains valid at that boundary.

Substituting \eqref{eq:M-explicit}, \eqref{eq:Z-k}, and
\eqref{eq:sum-Z-k} into \eqref{eq:Ck}, and putting all terms over the
common denominator \(2h^3r^{2k}t(t+2)\), gives
\begin{align}
 k-\mathcal C_k
 =\frac{\alpha\left[
 (th+2)^2-r^{2k}\bigl((t+2k)h+2\bigr)^2\right]}
 {2h^3r^{2k}t(t+2)}.                              \label{eq:cut-residual}
\end{align}
For completeness, the substitution before collecting is
\begin{align*}
 k-\mathcal C_k
 ={}&k-\alpha\left[
 \frac{\mathsf Z_{k-1}}{t}
 +\frac{2\sum_{j=0}^{k-2}\mathsf Z_j}{t(t+2)}
 \right]-\frac h2\mathsf Z_{k-1},
\end{align*}
with the two displayed expressions \eqref{eq:Z-k} and
\eqref{eq:sum-Z-k}; expansion of the two squares in
\eqref{eq:cut-residual} reproduces these four terms one for one.
Finally, the cut definition \eqref{eq:q-def} says
\(\alpha q_{k-1}=k-\mathcal C_k\).  Cancelling \(\alpha\) in
\eqref{eq:cut-residual} proves \eqref{eq:q-closed}.

\subsection{The first two horizons}
\label{app:small-horizons}

For \(n=1\), the root is \(r=\sqrt2-1\), \(h=\sqrt2\), and the sole
weight is \(p_0=1/\alpha\).  There are no adjacent weights.  The
certificate matrix and its two required summaries are
\[
 C=\frac1\alpha
 \begin{pmatrix}-r&r\\1&-1\end{pmatrix},
 \qquad
 \one^\top C=(1,-1),
 \qquad
 CB+B^\top C^\top
 =\operatorname{diag}\!\left(\alpha,-(1+r^{-1})\right).
\]

For \(n=2\), the root is \(r=1/2\), \(h=3/2\), and the construction gives
\[
 p_0=q_0=\frac13,\qquad p_1=\frac{13}{3}.
\]
The assembled matrix is
\[
 C=
 \begin{pmatrix}
 -\frac12&\frac13&\frac16\\[1mm]
 \frac76&-4&\frac{17}{6}\\[1mm]
 \frac13&\frac{14}{3}&-5
 \end{pmatrix}.
\]
Every off-diagonal entry is positive, every row sum is zero, and direct
multiplication gives
\[
 \one^\top C=(1,1,-2),
 \qquad
 CB+B^\top C^\top=\operatorname{diag}\!\left(\frac12,\frac12,-10\right).
\]
These are exactly \eqref{eq:b} and \eqref{eq:H-target}.

\subsection{Exact symbolic audit}
\label{app:symbolic-audit}

The supplementary source archive contains
\path{supplement/exact_interval_certificate_checks.py}.  It checks,
over exact symbolic expressions, the interval diagonal and cut formulas,
the \(G\)-to-\(K\) telescoping identity, the suffix closed form including
the \(i=0,1\) boundaries, the positive factor
\eqref{eq:DE-gap}, the Abel sum \eqref{eq:cut-residual}, and the two small
horizons above.  A second script assembles the matrices numerically for a
range of horizons.  These computations are reproducibility aids only; the
proof is the algebra given in the main text and this appendix.

\end{document}